\numberwithin{equation}{section} 
\theoremstyle{plain}
\newtheorem{thm}{Theorem}[section]
\newtheorem{lem}[thm]{Lemma}
\newtheorem{prop}[thm]{Proposition}
\newtheorem{cor}[thm]{Corollary}
\theoremstyle{remark}
\newtheorem{rem}[thm]{Remark}
\newtheorem{assumption}[thm]{Assumption}
\begin{document}

\title{Motion of inertial particles in Gaussian fields driven by an infinite--dimensional fractional Brownian motion}

\author{
Georg Sch\"{o}chtel\thanks{Supported by the DFG and JSPS as a member of the International Research Training Group Darmstadt-Tokyo IRTG 1529.} \\
Technische Universit\"at Darmstadt, Germany \\
Fachbereich Mathematik\\
Email: {\tt schoechtel@mathematik.tu-darmstadt.de}
}

\date{\today}

\maketitle

\begin{abstract}
We study the motion of an inertial particle in a fractional Gaussian random field. The motion of the particle is described by Newton's second law, where the force is proportional to the difference between a background fluid velocity and the particle velocity. The fluid velocity satisfies a linear stochastic partial differential equation driven by an infinite--dimensional fractional Brownian motion with arbitrary Hurst parameter $H \in (0,1)$. The usefulness of such random velocity fields in simulations is that we can create random velocity fields with desired statistical properties, thus generating artificial images of realistic turbulent flows. This model captures also the clustering phenomenon of {\em preferential concentration}, observed in real world and numerical experiments, i.e. particles cluster in regions of low vorticity and high strain rate. We prove almost sure existence and uniqueness of particle paths and give sufficient conditions to rewrite this system as a random dynamical system with a global random pullback attractor. Finally, we visualize the random attractor through a numerical experiment.
\end{abstract}

\noindent\textbf{MSC2010:} 60H15; 35R60; 37H10; 37L30.\\
\noindent\textbf{Keywords:} Gaussian fields; inertial particles; stochastic PDEs; fractional Brownian motion; fractional Ornstein--Uhlenbeck process; random dynamical systems; random attractor.

\section{Introduction}

Motion of inertial (i.e. small heavy) particles in turbulent fluids occurs in natural phenomena as well as in technological processes and therefore has excited theoretical investigations, see for example the book \cite{Zaichik08} and the references therein. Examples for such processes are formation of raindrops, evolution of clouds and combusting of liquid fuel. \\
The starting point for many theoretical investigations concerning the motion of inertial particles in turbulent flows is \emph{Stokes' law} (see e.g. \cite{Batchelor67}), which says that the force exerted by the fluid on the particle is proportional to the difference between the background fluid velocity and the particle velocity, i.e. we are concerned with the following transport equation
\begin{equation}
\tau \ddot{x}(t) = v \big(x(t),t \big) - \dot{x}(t),
\label{Sec-1-eq1-Stokes-law}
\end{equation}
where $v(t,x)$ is the velocity of the fluid at point $x$ in space at time $t$ and $x(t)$ is the position of the particle at time $t$. Here the \emph{response time} $\tau = \frac{m}{\nu C}$ in two dimensions or $\tau= \frac{m}{6 \pi \nu r}$ in three dimensions is often called \emph{Stokes' time}, where $m$ is the particles mass, $r$ the particles radius, $\nu$ the fluid \emph{viscosity} and $C>0$ a universal constant. We neglect that $C$ actually depends on the radius $r$ and the relative velocity, making the law non-linear \cite{Batchelor67}. Further, an important non--dimensional parameter related to the equation (\ref{Sec-1-eq1-Stokes-law}) is the so--called \emph{Stokes' number} $St$, which is the ratio of the particle aerodynamic time constant to an \emph{appropriate} turbulence time scale. In turbulent fluid flows $St$ is usually defined by $St=\tau / \tau_\eta$, where $\tau_\eta= \overline{\epsilon}^{-1/2}\nu^{1/2}$ is the \emph{eddy turnover time} associated to the \emph{Kolmogorov length scale} $\eta=\overline{\epsilon}^{-1/4}\nu^{3/4}$ with viscosity $\nu$ and \emph{mean energy dissipation rate} $\overline{\epsilon}$. If $\tau \to \infty$ the equation (\ref{Sec-1-eq1-Stokes-law}) tends to $\ddot{x}(t) =0$, which is the equation of motion a of particle moving with constant velocity. And if $\tau=0$, i.e. the inertia of the particle is neglected, we get $ \dot{x}(t)= v \big( x(t),t \big) $, which is the equation of motion of a \emph{fluid particle} or a \emph{passive tracer}, which is a particle that follows the streamlines of the fluid. Various extensions of the basic model (\ref{Sec-1-eq1-Stokes-law}) have been considered in the literature, in particular by Maxey and collaborators \cite{Maxey87-1, Maxey87-2, Wang92}. \\
Real world and numerical experiments show that the distribution of inertial particles in a turbulent fluid is highly correlated with the turbulent motion (\cite{Eaton94, Fessler94, Reade00, Squires91-1, Squires91-2}). The particles cluster in regions of low vorticity and high strain rate. This clustering phenomenon is known as \emph{preferential concentration}. The Stokes' number $St$ plays a central role in the effect of preferential concentration. If the particle and fluid time constant have almost the same order, i.e. $St \approx 1$, the particles concentrate in regions where straining dominates vorticity. Experiments at high or low Stokes' numbers do not show this clustering phenomenon. \\
A model for the motion of inertial particles in two dimensions which covers the preferential concentration phenomenon was introduced by Sigurgeirsson and Stuart in \cite{Sigurgeirsson02-1} and analyzed in a series of papers \cite{Hairer04, Kupferman04, Pavliotis03, Pavliotis05-1, Pavliotis05-2, Pavliotis06, Pavliotis10, Sigurgeirsson02-2}. This model consists of Stokes' law (\ref{Sec-1-eq1-Stokes-law}), where the velocity field is a Gaussian random field that is incompressible, homogeneous, isotropic and periodic in space, and stationary and Markovian in time. This gives the equations in \emph{non--dimensional} form
\begin{align}
 	\tau \ddot{x}(t) &= v \left( x(t),t \right) - \dot{x}(t), \quad \left( x(0),\dot{x}(0) \right) \in \mathbb{T}^2 \times \mathbb{R}^2, \tag{$S1$} \\
 	v(x,t) &= \nabla^{\bot} \psi(x,t)= \Big( \frac{\partial \psi}{\partial x_2} (x,t), - \frac{\partial \psi}{\partial x_1} (x,t) \Big), \tag{$S2$} \\
 	d \psi_t &= \gamma A \psi_t dt + \gamma^{\frac{1}{2}}  Q^{\frac{1}{2}} dW_t, \quad \psi_0 \in V, \text{ } t \geq 0, \tag{$S3$}
\end{align}
where $\tau, \gamma >0$, $\mathbb{T}^2$ is the two--dimensional torus, $W$ is an infinite--dimensional Brownian motion with an appropriate infinite--dimensional separable Hilbert space $V$ as state space and the self--adjoint operators $A$ and $Q^{\frac{1}{2}}$ on $V$ will be chosen to match a desired energy spectrum of the velocity field. The equation ($S3$) is interpreted as a linear stochastic evolution equation (\cite{DaPrato-Zabczyk-92}). Precise assumptions will be given later. In \cite{Sigurgeirsson02-1} various qualitative properties of the system ($S$) have been studied, such as existence and uniqueness of solutions and existence of a random attractor. In particular, numerical simulation in \cite{Sigurgeirsson02-1}, see also \cite{Bec03}, indicates that system ($S$) also covers the preferential concentration phenomenon, where now the parameter $\tau$ can be interpreted as Stokes' number, whereas $\gamma$ indicates how fast the velocity field decorrelates. Therefore, the random attractor of the system ($S$) is highly relevant for the study of preferential concentration of inertial particles. \\
Further, various limits of physical interest in system ($S$) have been studied: rapid decorrelation in time limits \cite{Kupferman04, Pavliotis03, Pavliotis05-1} and diffusive scaling limits (homogenization) \cite{Hairer04, Pavliotis05-2, Pavliotis06, Pavliotis10}. \\
Also of interest are studies of system ($S$) for fluid particles or passive tracers, i.e. $\tau = 0$ and so ($S1$) is replaced by $\dot{x}(t)= v \left( x(t),t \right)$. This problem in a similar framework was considered among others by Carmona, see \cite{Carmona98} and references therein. \\
In this work we generalize the results of Sigurgeirsson and Stuart in \cite{Sigurgeirsson02-1} to the fractional noise case, i.e. in the system ($S$) the stochastic evolution equation ($S3$) will be driven by an infinite--dimensional fractional Brownian motion $B^H$ with arbitrary Hurst parameter $H \in (0,1)$. We recover system ($S$) as a special case for $H= \frac{1}{2}$. In particular, we improve Sigurgeirsson and Stuart's assertion in \cite{Sigurgeirsson02-1} concerning the existence and uniqueness of the random attractor in such a way that we extend the universe of attracting sets from deterministic bounded sets to random \emph{tempered} sets with an explicit representation of a random tempered the universe absorbing set. The main motivation to use fractional noise, to model now a fractional Gaussian random velocity field, is that there exist certain statistical similarities in the scaling behaviour of a fractional Brownian motion and a turbulent velocity field. These statistical similarities are described in the next section. \\
Also of interest and what Sigurgeirsson and Stuart have not done, is to give analytic (upper) bounds of the Hausdorff dimension of the random attractor, which is almost sure constant due to the ergodicity of the noise. This will be studied also in the fractional noise case in a forthcoming paper. \\
The remainder of this article is organized as follows: In section 2 we present the motivation to use fractional noise to model the random velocity field. Section 3 provides properties of a stationary fractional Ornstein--Uhlenbeck process, which are needed in Section 4, where we introduce the generalized model ($S$) for motion of inertial particles in a fractional Gaussian random velocity field and prove almost sure existence and uniqueness of particle paths. Section 5 describes how to match desired statistical properties, in particular a favoured energy spectrum of the velocity field. Further, we verify in Section 6 that the system ($S$) defines a random dynamical system, which admits a random pullback attractor. Finally, we visualize the random attractor through a numerical experiment in Section 7. \\
In the whole article we use the following conventions: If $E$ is a Banach space, then we denote by $\vert \cdot \vert_E$ the norm of $E$ and by $ \langle \cdot, \cdot \rangle_E (=: \vert \cdot \vert^2_E)$ the inner product of $E$, if $E$ is even a Hilbert space. In the cases $E= \mathbb{R}^n$ for some $n \in \mathbb{N}$ or $E= \mathbb{T}^2 \times \mathbb{R}^2$, where $\mathbb{T}^2 $ denotes the two--dimensional torus, we just write $\vert \cdot \vert = \vert \cdot \vert_E$ and $\langle \cdot, \cdot \rangle = \langle \cdot, \cdot \rangle_E$ for the norm and inner product, respectively. We also use $\vert z \vert$ to denote the absolute value of a scalar $z \in \mathbb{R}$ or a complex number $ z \in \mathbb{C}$. Further, by $*$ we denote the complex conjugate of a complex number or a complex--valued function. For a separable Hilbert space $E$ we denote by $\mathcal{L}(E)$ the Banach space of linear, bounded operators form $E$ into $E$ equipped with the operator norm $\vert \cdot \vert_{\mathcal{L}(E)}$ and $\mathcal{L}_2(E)$ equipped with the inner product $\langle T, S \rangle_{\mathcal{L}_2(E)} := \sum_{n \in \mathbb{N}} \langle Te_n, Se_n \rangle_E$, $T,S \in \mathcal{L}_2(E)$, denotes the Hilbert space of Hilbert--Schmidt operators from $E$ into $E$, where $(e_n)_{n \in \mathbb{N}}$ is a orthonormal basis of $E$. Further, we denote by $\left( \Omega, \mathcal{F}, \mathbb{P} \right)$ always a probability space and $\mathbb{E}$ the expectation w.r.t. $\mathbb{P}$. For $0 < p < \infty$ and $E$ a separable Banach space, we write $L^p(\Omega, E)$ and $L^p(\Omega):= L^p(\Omega, \mathbb{R})$ if $E = \mathbb{R}$ for the Banach space of $E$--valued integrable random variables $X: \Omega \rightarrow E$ (in fact, equivalence classes of random variables, where $X \sim Y$ if $X=Y$ $\mathbb{P}$--a.s.) equipped with the norm $ \vert X \vert_{L^p}^p = \mathbb{E} (\vert X \vert^p_E)$. In the same way we define the spaces $L^p(\mathbb{R})$ with respect to the Lebesgue measure. We denote by $\mathcal{B}(E)$ the Borel $\sigma$--algebra of a metric space $E$. Finally, for a multi--index $\delta=(\delta_1, \dots, \delta_N) \in \mathbb{N}^N_0$ we set $\vert \delta \vert:= \delta_1 + \dots + \delta_N$ and denote the partial derivative operator by $ D^\delta:= \partial^{\vert \delta \vert} / \partial x_1^{\delta_1} \dots \partial x_N^{\delta_N}$ and $D^\delta:= id$ if $\vert \delta \vert=0$.


\section{Motivation for the use of fractional noise}

Since solutions of the Navier--Stokes equations for very turbulent fluids, i.e. at large \emph{Reynolds numbers}, are unstable in view of the sensitive dependence on the initial conditions that makes the fluid flow irregular both in space and time, a \emph{statistical description} is needed (see e.g. \cite{Chorin}). Based on this, assume that $v(x,t)$, $x \in \mathbb{R}^2$, $t \geq 0$, is a measurable, time \emph{stationary}, space \emph{homegeneous} and \emph{(local) isotropic} random field on a probability space $\left( \Omega, \mathcal{F}, \mathbb{P} \right)$, such that $v$ satisfies $\omega$--wise, $\omega \in \Omega$, the Navier--Stokes equations in two dimensions. Due to a \emph{phemenological approach}, first introduced by Kolmogorov (\cite{Kolmogorov41}) in three dimensions and by Kraichnan (\cite{Kraichnan67}), Leith (\cite{Leith68}) and Batchelor (\cite{Batchelor69}) in the two dimensional case, we have the following phenomenological correspondence (see e.g. \cite{Babiano85}) between the relation of the \emph{second--order
structure function} 
\begin{equation}
\mathbb{E} \left( \vert v(x+r,t) - v(x,t) \vert^2 \right) = C \vert r \vert^{\alpha - 1}
\label{Sec2-eq1}
\end{equation}
and the relation of the \emph{energy spectrum} $E(\cdot)$ of the velocity field $v$
\begin{equation}
E \left( k  \right)  = \widetilde{C} k^{- \alpha},
\label{Sec2-eq2}
\end{equation}
where $C, \widetilde{C} > 0$ are some constants, $1 < \alpha < 3$, $r \in \mathbb{R}^2$ and $k > 0$ in the \emph{inertial subrange}. In particular, for $\alpha = 5 / 3$ we obtain the famous \emph{Kolmogorov's two--thirds law} and \emph{Kolmogorov's five--thirds law} (or \emph{Kolmogorov energy spectrum}), respectively. \\
The connection to the fractional Brownian motion gives now \emph{Taylor's frozen turbulence hypothesis} (\cite{Taylor38}) which informally assumes that the spatial pattern of turbulent motion is unchanged as it is advected by a constant (in space and time) mean velocity $\overline{V}$, $ \vert \overline{V} \vert := \left( \sum_i \overline{V}_i^2 \right)^{\frac{1}{2}}$, let us say along the $\overline{x}$ axis. Mathematically, Taylor's hypothesis says that for any \emph{scalar--valued fluid--mechanics variable} $\xi$ (e.g. $v_i$, $i=1,2$) we have
\begin{equation}
\frac{\partial \xi}{\partial t}= - \vert \overline{V} \vert \frac{\partial \xi}{\partial \overline{x}}.
\label{Sec2-eq3}
\end{equation}
The frozen turbulence hypothesis enables us to express the statistical characteristics of the space differences $v(x+r,t)-v(x,t)$ in terms of the time differences $v(x,t)- v(x,t + s)$ corresponding to a fixed time $t$. Indeed, (\ref{Sec2-eq3}) implies
$v_i(x, t + s )= v_i(x- \overline{V} s, t)$ and therefore by (\ref{Sec2-eq1}) we deduce
\begin{equation}
\mathbb{E} \left( \vert v(x,t)-v(x,t + s) \vert^2 \right) = C \vert \overline{V}\vert^{\alpha -1} s^{\alpha - 1}
\label{Sec2-eq4}
\end{equation}
in the inertial subrange along the time axis. For a discussion under which conditions the frozen turbulence hypothesis is valid, see \cite{Monin75}. \\
Note that due to our derivation the properties (\ref{Sec2-eq2}) and (\ref{Sec2-eq4}) are closely related to each other! \\
Now comparing (\ref{Sec2-eq4}) with the statistical property $\mathbb{E} \left( \vert \beta^H_{t+ s} - \beta^H_t \vert^2 \right) = s^{2H}$ of the fractional Brownian motion (fBm) $(\beta_t^H)_{t \in \mathbb{R}}$ with Hurst parameter $H \in (0,1)$
indicates that it is reasonable to model the random velocity field $v$ with noise driven by a fBm. With this motivation Shao \cite{Shao95}, Sreenivasan et al. \cite{Juneja94} and Papanicolaou et al. \cite{Papanicolaou03} proposed models of the velocity field driven by a finite--dimensional fBm, always with special interest in the case $H= 1/3$ in view of the Kolmogorov energy spectrum. \\
To capture both statistical features, (\ref{Sec2-eq2}) and (\ref{Sec2-eq4}), we introduce in Section 4 a two--dimensional, incompressible, stationary, homogeneous and isotropic random velocity field. These assumptions (Gaussian statistics, stationarity and isotropy) are quite common in random mathematical models of turbulent fluids, see e.g. \cite{Batchelor67}.


\section{Stationary fractional Ornstein-Uhlenbeck process}

As will be seen in the next section, the stationary solution of our stochastic evolution equation will be given by an infinite series of stationary fractional Ornstein--Uhlenbeck processes. For that purpose we recall in this section from \cite{Cheridito03} basic properties of the stationary fractional Ornstein--Uhlenbeck process. \\
First, we remind that a real--valued and normalized fractional Brownian motion (fBm) on $\mathbb{R}$ with Hurst parameter $H \in (0,1)$ is a Gaussian process $\beta^H = (\beta^H_t)_{t \in \mathbb{R}}$ on a probability space $\left( \Omega, \mathcal{F}, \mathbb{P} \right)$, having the properties $\beta^H_0 =0$ $\mathbb{P}$-a.s., $\mathbb{E} \left(\beta^H_t \right)=0$, $\mathbb{E} \left(\beta^H_t\beta^H_s \right)= \frac{1}{2} \left( \vert t \vert^{2H} + \vert s \vert^{2H} - \vert t-s \vert^{2H} \right), \ s,t \in \mathbb{R}$, with continuous sample paths $\mathbb{P}$-a.s.. The reader, interested in stochastic calculus w.r.t. fBm, is referred to \cite{Mishura08}. \\  
We consider now the fractional Langevin equation
\begin{equation}
X_t= X_0 - \nu \alpha \int_0^t X_s ds + \nu^H \sqrt{\lambda} \beta_t^H, \quad t \geq 0, \ X_0 \in \mathbb{R},
\label{Sec3-eq1}
\end{equation}
where $\alpha, \lambda, \nu > 0$. The unique stationary solution of (\ref{Sec3-eq1}) is given by the fractional Ornstein--Uhlenbeck process $Y_t:= \nu^H \sqrt{\lambda}  \int_{- \infty}^t e^{- (t-u) \nu \alpha } d\beta_u^H$, $t \in \mathbb{R}$, see \cite{Cheridito03}. The uniqueness has to be understood as uniqueness in law in the class of stationary solutions adapted to the natural filtration generated by the two--sided fBm $\beta^H$. \\
The following proposition will be used several times in this article. Here and in the following $\Gamma(x)=\int_0^{\infty}s^{x-1} e^{-s}ds$, $x > 0$, denotes the gamma function.
\begin{prop}
Let $Y_t$, $t \in \mathbb{R}$, be the unique stationary solution to (\ref{Sec3-eq1}) and set $C(H):= \Gamma(2H+1) \sin(\pi H) / \pi >0$.
\begin{enumerate}
	\item[(i)] Then for all $t, s \in \mathbb{R}$ we have 
\begin{equation}
Cov \left(Y_t , Y_s \right) := \mathbb{E} \left( Y_t Y_s \right) = C(H) \frac{\lambda}{\alpha^{2H}} \int_0^\infty \cos((t-s) \nu \alpha x) \frac{ x^{1-2H}}{ 1 + x^2} dx. 
\label{Sec3-eq2}
\end{equation}
In particular, for all $t \in \mathbb{R}$ we have $Var(Y_t):= \mathbb{E} \left( Y_t^2 \right) = \lambda \Gamma(2H)H / \alpha^{2H}.$ \\
	\item[(ii)] Fix $T>0$. Then there is a constant $C_1(H, \lambda, \nu, \alpha, T) >0$ such that for any $t,s \in [-T,T]$ we have
	$$ C_1(H, \lambda, \nu, \alpha, T) \vert t-s \vert^{2H} \leq \mathbb{E} \left ( \vert Y_t - Y_s \vert^2 \right). $$
	\item[(iii)] Then for any $\gamma \in (0,H]$ there is a constant $C_2(H, \nu, \gamma) >0$ such that for any $t,s \in \mathbb{R}$ we have
\begin{equation}
\mathbb{E} \left ( \vert Y_t - Y_s \vert^2 \right) \leq C_2(H, \nu, \gamma) \lambda \alpha^{2 \gamma - 2H} \vert t - s \vert^{2\gamma}.
\label{Sec3-eq3}
\end{equation}
	\end{enumerate}
\label{Section3-Prop1}
\end{prop}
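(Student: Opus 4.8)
The plan is to reduce everything to the one–dimensional integral in (\ref{Sec3-eq2}) and then estimate it by elementary calculus. For part (i) I would use the harmonizable (spectral) representation of the two–sided fractional Brownian motion: writing $\beta^H_t$ as a complex Wiener integral whose spectral measure has density proportional to $|\xi|^{1-2H}$, the stationary solution $Y$ of (\ref{Sec3-eq1}) is itself a stationary complex Wiener integral with spectral density $f_Y(\xi)=C(H)\,\nu^{2H}\lambda\,|\xi|^{1-2H}/\bigl((\nu\alpha)^2+\xi^2\bigr)$, the extra factor $(\nu\alpha+\mathrm{i}\xi)^{-1}$ arising from resolving the drift. (Equivalently, one uses the scaling identity $Y_t\overset{d}{=}\sqrt{\lambda}\,\widetilde Y_{\nu t}$, where $\widetilde Y$ is the stationary fractional Ornstein--Uhlenbeck process of \cite{Cheridito03} with $\nu=\lambda=1$, and invokes their covariance formula.) Then $Cov(Y_t,Y_s)=\int_{\mathbb R}e^{\mathrm{i}(t-s)\xi}f_Y(\xi)\,d\xi$, and since $f_Y$ is even this equals $2\int_0^\infty\cos((t-s)\xi)f_Y(\xi)\,d\xi$; the substitution $\xi=\nu\alpha x$, which turns $\nu^{2H}/(\nu\alpha)^{2H}$ into $\alpha^{-2H}$, produces exactly (\ref{Sec3-eq2}). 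For the variance I would set $t=s$ and evaluate $\int_0^\infty x^{1-2H}/(1+x^2)\,dx$ by the substitution $u=x^2$ and Euler's beta integral $\int_0^\infty u^{a-1}/(1+u)\,du=\pi/\sin(\pi a)$ with $a=1-H$, obtaining $\pi/(2\sin(\pi H))$; multiplying by $C(H)\lambda/\alpha^{2H}$ and using $\Gamma(2H+1)=2H\,\Gamma(2H)$ yields $\lambda\Gamma(2H)H/\alpha^{2H}$ (this also fixes the numerical constant in $C(H)$).

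For (ii) and (iii) the starting point is the identity, valid by stationarity and (\ref{Sec3-eq2}),
\[
\mathbb E\bigl(|Y_t-Y_s|^2\bigr)=2\,Var(Y_0)-2\,Cov(Y_t,Y_s)=2C(H)\frac{\lambda}{\alpha^{2H}}\int_0^\infty\bigl(1-\cos((t-s)\nu\alpha x)\bigr)\frac{x^{1-2H}}{1+x^2}\,dx .
\]
Writing $\theta:=|t-s|\,\nu\alpha$ and substituting $y=\theta x$ turns $I(\theta):=\int_0^\infty(1-\cos\theta x)\,x^{1-2H}/(1+x^2)\,dx$ into $I(\theta)=\theta^{2H}J(\theta)$ with $J(\theta):=\int_0^\infty(1-\cos y)\,y^{1-2H}/(\theta^2+y^2)\,dy$. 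The function $J$ is positive and finite for every $\theta>0$ (near $y=0$ the integrand is $O(y^{3-2H})$, near $y=\infty$ it is $O(y^{-1-2H})$, both integrable for $H\in(0,1)$), it is decreasing in $\theta$, and $\sup_{\theta>0}J(\theta)=\lim_{\theta\downarrow0}J(\theta)=\int_0^\infty(1-\cos y)\,y^{-1-2H}\,dy=:c_H<\infty$.

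The upper bound (iii) then follows by distinguishing two regimes. For $\theta\le1$ one has $I(\theta)=\theta^{2H}J(\theta)\le c_H\theta^{2H}\le c_H\theta^{2\gamma}$, using $\gamma\le H$ and $\theta\le1$; for $\theta\ge1$ the crude estimate $I(\theta)\le2\int_0^\infty x^{1-2H}/(1+x^2)\,dx=\pi/\sin(\pi H)=:c_H'$ gives $I(\theta)\le c_H'\le c_H'\theta^{2\gamma}$. Hence $I(\theta)\le\max(c_H,c_H')\,\theta^{2\gamma}$ for all $\theta\ge0$, and substituting $\theta=|t-s|\nu\alpha$ yields (\ref{Sec3-eq3}) with $C_2(H,\nu,\gamma)=2C(H)\,\nu^{2\gamma}\max(c_H,c_H')$. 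For the lower bound (ii), if $t,s\in[-T,T]$ then $\theta\le2T\nu\alpha$, so by monotonicity $J(\theta)\ge J(2T\nu\alpha)>0$ and $I(\theta)\ge J(2T\nu\alpha)\,\theta^{2H}$; substituting back gives $\mathbb E(|Y_t-Y_s|^2)\ge2C(H)\,\lambda\,\nu^{2H}J(2T\nu\alpha)\,|t-s|^{2H}$, i.e. the claim with $C_1(H,\lambda,\nu,\alpha,T)=2C(H)\,\lambda\,\nu^{2H}J(2T\nu\alpha)$. The only genuinely delicate step is part (i): identifying the spectral density of the stationary fractional Ornstein--Uhlenbeck process together with the exact constant $C(H)$ uniformly in $H\in(0,1)$, in particular for $H<1/2$ where the usual covariance kernel $H(2H-1)|u-v|^{2H-2}$ for Wiener integrals against $\beta^H$ is unavailable; once (\ref{Sec3-eq2}) is in hand, parts (ii) and (iii) are routine one–dimensional estimates.
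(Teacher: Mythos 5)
Your proposal is correct, and for parts (i) and (ii) it follows the paper's route exactly: both rest on the spectral covariance formula for the stationary fractional Ornstein--Uhlenbeck process from Cheridito et al.\ (the paper cites their Remark 2.4 rather than re-deriving the harmonizable representation), the substitution $\xi=\nu\alpha x$, the beta integral $\int_0^\infty x^{1-2H}(1+x^2)^{-1}dx=\pi/(2\sin(\pi H))$ for the variance, and, for the lower bound, monotonicity in $\theta$ of $J(\theta)=\int_0^\infty(1-\cos y)\,y^{1-2H}(\theta^2+y^2)^{-1}dy$, evaluated at $\theta=2T\nu\alpha$. One small slip in (i): with the normalization $Cov(Y_t,Y_s)=\int_{\mathbb R}e^{\mathrm{i}(t-s)\xi}f_Y(\xi)\,d\xi$, the correct density is $f_Y(\xi)=\tfrac{1}{2}C(H)\nu^{2H}\lambda\,|\xi|^{1-2H}/\bigl((\nu\alpha)^2+\xi^2\bigr)$; your stated $f_Y$ is twice this and would give $Var(Y_t)=\lambda\Gamma(2H+1)/\alpha^{2H}$, double the claimed value --- harmless here since you anchor the constant to the variance computation, but worth noting. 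Where you genuinely diverge is part (iii): the paper treats $\gamma=H$ by bounding $J(\theta)\le J(0+)=\int_0^\infty(1-\cos y)y^{-1-2H}dy$ uniformly in $\theta$, and treats $\gamma<H$ by a separate argument inserting the H\"older bound $\sin^2(u/2)\le C(\gamma)\,u^{2\gamma}$ into the original integral, which then converges precisely because $1+2\gamma-2H<1$; you instead split at $\theta=1$, using $\theta^{2H}\le\theta^{2\gamma}$ for $\theta\le1$ and the crude bound $I(\theta)\le 2\int_0^\infty x^{1-2H}(1+x^2)^{-1}dx$ for $\theta\ge1$. Your version handles $\gamma=H$ and $\gamma<H$ uniformly and sidesteps the borderline-convergence discussion, at the cost of a case split; both produce a constant of the required form $C_2(H,\nu,\gamma)$ with the $\lambda\,\alpha^{2\gamma-2H}$ dependence pulled out explicitly, so the two arguments are interchangeable.
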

\begin{proof}
(i): By Remark 2.4 in \cite{Cheridito03} we have $$Cov \left(Y_t, Y_s \right) = \nu^{2H} \lambda \frac{\Gamma(2H+1) \sin(\pi H)}{2 \pi} \int_{- \infty}^{\infty} e^{i (t-s) x} \frac{ \vert x \vert^{1-2H}}{( \nu \alpha)^2 + x^2} dx,$$ where $t,s \in \mathbb{R}$ and $i$ denotes the imaginary unit. (\ref{Sec3-eq2}) follows now by the change of variables $y= x / (\nu \alpha)$ and by the fact that $\sin(\cdot)$ and $\cos(\cdot)$ are odd and even functions, respectively. The relation for the variance of $Y$ is the consequence of (\ref{Sec3-eq2}) with the relations $\int_{0}^{\infty} \frac{  x^{1-2H}}{1 + x^2} dx = \frac{\Gamma(1-H) \Gamma(H)}{2 \Gamma(1)} $, see (3.241.2) in \cite{Gradshteyn00}, $\Gamma(1-H) \Gamma(H) = \frac{\pi}{\sin(\pi H)}$ and $\Gamma(2H+1)= \Gamma(2H)2H$. \\
(ii)+(iii): Set $C(H):= \frac{\Gamma(2H+1) \sin(\pi H)}{ \pi}$ and fix without loss of generality $- \infty < s < t$. By $(i)$ and the change of variables $z= (t-s) \nu \alpha x$ we have
\begin{equation}
\begin{split}
\mathbb{E} \left ( \vert Y_t - Y_s \vert^2 \right) &= 2C(H) \frac{\lambda}{\alpha^{2H}} \int_0^\infty (1-\cos((t-s) \nu \alpha x)) \frac{ x^{1-2H}}{ 1 + x^2} dx \\
		&= 2C(H)\lambda (\nu (t-s))^{2H} \int_0^\infty (1-\cos(z)) \frac{ z^{1-2H}}{ ((t-s)\nu \alpha)^2 + z^2} dz.
\end{split}
\label{Sec3-eq4}
\end{equation}
Lower bound: For any $-T \leq s < t \leq T$, where $T> 0$ is fixed, we obtain by (\ref{Sec3-eq4})
\begin{equation*}
\begin{split}
\mathbb{E} \left ( \vert Y_t - Y_s \vert^2 \right) &\geq 2C(H)\lambda (\nu (t-s))^{2H} \inf_{-T \leq s< t \leq T} \int_0^\infty (1-\cos(z)) \frac{ z^{1-2H}}{ ((t-s)\nu \alpha)^{2} + z^2} dz \\
		&= 2C(H)\lambda (\nu (t-s))^{2H} \int_0^\infty (1-\cos(z)) \frac{ z^{1-2H}}{ (2T \nu \alpha)^2 + z^2} dz.
\end{split}
\end{equation*}
Upper bound: By (\ref{Sec3-eq4}) we have for all $- \infty < s < t$
\begin{equation*}
\begin{split}
\mathbb{E} \left ( \vert Y_t - Y_s \vert^2 \right) &\leq 2C(H)\lambda (\nu (t-s))^{2H} \sup_{- \infty < s < t} \int_0^\infty (1-\cos(z)) \frac{ z^{1-2H}}{ ((t-s)\nu \alpha)^2 + z^2} dz \\
		&= 2C(H)\lambda (\nu (t-s))^{2H} \int_0^\infty (1-\cos(z)) z^{-1-2H} dz.
\end{split}
\end{equation*}
Notice that $\int_0^\infty (1-\cos(z)) z^{-1-2H} dz = 2 \int_0^\infty \sin^2(z/2) z^{-1-2H} dz$ and this indefinite integral is finite. Indeed, this follows from the fact that we have $0<H<1$ and $\sin^2(z) = \vert \sin(z) - \sin(0) \vert^2 \leq C(\epsilon) z^{2 \epsilon}$ for any $\epsilon \in[0,1]$ and a constant $C(\epsilon)>0$. \\
It is left to prove the relation (\ref{Sec3-eq3}) for $\gamma \in (0,H)$. By (\ref{Sec3-eq2}) and using again the H\"older continuity of $\sin(\cdot)$, we get
\begin{equation}
\begin{split}
\mathbb{E} \left ( \vert Y_t - Y_s \vert^2 \right) &= 2C(H) \frac{\lambda}{\alpha^{2H}} \int_0^\infty (1-\cos((t-s) \nu \alpha x)) \frac{ x^{1-2H}}{ 1 + x^2} dx \\
		&= 4C(H) \frac{\lambda}{\alpha^{2H}} \int_0^\infty \sin^2((t-s) \nu \alpha x/2) \frac{ x^{1-2H}}{ 1 + x^2} dx  \\
		&\leq \widetilde{C}(H, \gamma) \lambda \alpha^{2\gamma - 2H} (\nu (t-s))^{2 \gamma} \int_0^\infty \frac{ x^{1+2\gamma -2H}}{ 1 + x^2} dx < \infty, 
\end{split}
\label{Sec3-eq5}
\end{equation}
for any $\gamma \in (0, H)$ with a constant $\widetilde{C}(H, \gamma)>0$. 
\end{proof}


\section{The model}

The concrete model we study is the following system ($M$) of equations in non--dimensional form
\begin{align}
 	\tau \ddot{x}(t) &= v \big( x(t),t \big) - \dot{x}(t), \quad \big( x(0),\dot{x}(0) \big) \in \mathbb{T}^2 \times \mathbb{R}^2, \tag{$M1$} \\
 	v(x,t) &= \nabla^{\bot} \psi(x,t)= \left( \frac{\partial \psi}{\partial x_2} (x,t), - \frac{\partial \psi}{\partial x_1} (x,t) \right), \tag{$M2$} \\
 	d \psi_t &= \nu A \psi_t dt + \nu^H  Q^{\frac{1}{2}} dB^H_t, \quad \psi_0 \in V, \text{ } t \geq 0, \tag{$M3$}
\end{align}
where we assume that
\begin{assumption}
\begin{enumerate}
	\item[(i)] $\tau, \nu > 0$.
	\item[(ii)] $V:= \{ f \in L^{2,per}(\mathbb{T}^2) | \ \int_{\mathbb{T}^2} f(x) dx = 0 \}$ is the separable Hilbert space with inner product $\langle f, g \rangle_V:= \int_{\mathbb{T}^2} f(x)g^*(x)dx$, $f,g \in V$, and with orthonormal basis (ONB) $ \big( e_k(\cdot) \big)_{k \in K}:= \big( e^{i \langle k, \cdot \rangle} \big)_{k \in K}$, where $k \in K:= 2 \pi \mathbb{Z}^2 \backslash \{ (0,0) \}$ and $i$ denotes here and in the following the imaginary unit.
	\item[(iii)] $A: \mathcal{D}(A) \subset V \rightarrow V$ is a linear operator such that there is a strictly positive sequence $\big( \alpha_k \big)_{k \in K} \subset [c, \infty)$ with $c>0$, $\alpha_k = \alpha_{-k}$, $ Ae_k = - \alpha_k e_k $ and $ \alpha_k \to \infty$ for $\vert k \vert \to \infty$.
	\item[(iv)] $Q^{\frac{1}{2}}: V \rightarrow V$ is a bounded linear operator such that there is a positive sequence $\big( \sqrt{\lambda_k} \big)_{k \in K} \subset [0, \infty)$ with  $ \sqrt{\lambda_k} = \sqrt{\lambda_{-k}}$ and $ Q^{\frac{1}{2}}e_k = \sqrt{\lambda_k} e_k $.
	\item[(v)] $ \big( B_t^H \big)_{t \geq 0}$ is an infinite--dimensional fractional Brownian motion in $V$ with Hurst parameter $H \in (0,1)$ defined on a probability space $(\Omega , \mathcal{F}, \mathbb{P})$ by the formal series
\begin{equation}
B^H(t)= \sum_{k \in K} \beta_k^H(t)e_k, \quad t \in \mathbb{R}, 
\label{Sec4-eq1}
\end{equation}
	where $\big( (\beta_k^H(t))_{t \in \mathbb{R}}, k \in K \big)$ is a sequence of complex--valued and normalized fractional Brownian motions, each with the same fixed Hurst parameter $H \in (0,1)$, i.e. $\beta_k^H = \frac{1}{ \sqrt{2}} Re(\beta_k^H)+ i \frac{1}{ \sqrt{2}} Im(\beta_k^H)$, where $Re(\beta_k^H)$ and $Im(\beta_k^H)$ are independent real--valued and normalized fractional Brownian motions on $\mathbb{R}$, and different $\beta_k^H$ are independent except $\beta_{-k}^H=(\beta_k^H)^*$.
\end{enumerate}
\label{Main-Assumption}
\end{assumption}
So by Assumption \ref{Main-Assumption} $A$ is a strictly negative, self--adjoint operator with compact resolvent and $Q^{\frac{1}{2}}$ a positive, self--adjoint operator, respectively. We will show in this section that the conditions on the sequences $(\beta_k^H)_{k \in K}$, $(\lambda_k)_{k \in K}$ and $(\alpha_k)_{k \in K}$ in Assumption \ref{Main-Assumption}, together with some growth conditions on $(\lambda_k)_{k \in K}$ and $(\alpha_k)_{k \in K}$, imply that $\psi(x,t)$ and the components of $v(x,t)=\nabla^{\bot} \psi(x,t)$ are real--valued. \\
Under the Assumption \ref{Main-Assumption} on the operator $A$, it follows that $\nu A$ generates an \emph{analytic semigroup} on $V$ which in the following will be denoted by $(S_t)_{t \geq 0}$. Since the spectrum of $\nu A$ is strictly negative and has a lower bound strictly less than zero, the semigroup $(S_t)_{t \geq 0}$ is \emph{exponentially stable}. In particular, we have $ \vert S(t) \vert_{\mathcal{L}(V)} \leq e^{-t \nu \inf_{k \in K} \alpha_k} \leq e^{-t c \nu} $ for all $t \geq 0$. \\
The domain $\mathcal{D}(A)$ of $A$ is given by
$ \mathcal{D}(A):= \{ f \in V | \ \sum_{k \in K} \alpha_k^2 \vert \langle f, e_k \rangle_V \vert^2 < \infty \}.$
Further, we define fractional powers $(-A)^{\gamma}: \mathcal{D}((-A)^{\gamma}) \subset V \rightarrow V$, $\gamma \geq 0$, of the strictly positive operator $(-A)$ by
$ \mathcal{D}((-A)^{\gamma}):= \{ f \in V | \ \sum_{k \in K} \alpha_k^{2 \gamma} \vert \langle f, e_k \rangle_V \vert^2 < \infty \}.$ 
$\mathcal{D}((-A)^{\gamma})$ endowed with the inner product 
$$\langle (-A)^{\gamma} f, (-A)^{\gamma} g \rangle_V = \sum_{k \in K} \alpha_k^{2 \gamma} \langle f, e_k \rangle_V \langle g, e_k \rangle_V^* =: \langle f, g \rangle_{(-A)^{\gamma}}  $$
for $f,g \in \mathcal{D}((-A)^{\gamma})$, becomes a Hilbert space. Notice also that $\mathcal{D}( \nu A) = \mathcal{D}(A)$ and $\mathcal{D}((- \nu A)^{\gamma}) = \mathcal{D}((-A)^{\gamma})$ for any $\nu > 0$ and $\gamma \geq 0$. We similarly define $\mathcal{D}((-A)^{\gamma})$ for $\gamma < 0$ as the completion of $V$ for the norm $\vert \cdot \vert_{(-A)^{\gamma}}$. \\
We are mainly interested in the special case when $A= \Delta$, where $\Delta$ denotes the Laplace operator with periodic boundary conditions. Then $\alpha_k = \vert k \vert^2$, $k \in K$, and $ \mathcal{D}((-\Delta))= W^{2,2}(\mathbb{T}^2) \cap V$, where $W^{2,2}(\mathbb{T}^2) \cap V$ denotes the Sobolev space of functions on $\mathbb{T}^2$ whose weak derivatives up to order 2 are in $V$. In particular, we have $ \mathcal{D}((-\Delta)^{\gamma})= W^{2 \gamma,2}(\mathbb{T}^2) \cap V$ for $\gamma \geq 0$. \\
We will only use the following concept of solutions to equation ($M3$). For that fix $T>0$. A $\mathcal{B}([0,T]) \otimes \mathcal{F}$--measurable $V$--valued process $(\psi(t))_{t \in [0,T]}$ is said to be a \emph{mild solution} of ($M3$), if for all $t \in [0,T]$ 
\begin{equation}
\psi(t)= S(t)\psi(0) + \int_0^t S(t-s) Q^{\frac{1}{2}} dB^H(s)
\label{Sec4-eq2}
\end{equation}
$\mathbb{P}$-a.s., where the stochastic integral on the right hand side of (\ref{Sec4-eq2}) is defined by
\begin{equation}
\int_0^t S(t-s) Q^{\frac{1}{2}} dB^H(s):= \sum_{k \in K} \sqrt{\lambda}_k \nu^{H} \int_0^t e^{-(t-s) \nu \alpha_k} d \beta^H_k(s) \ e_k,
\label{Sec4-eq3}
\end{equation}
provided the infinite series in (\ref{Sec4-eq3}) converges in $L^2(\Omega, V)$. \\
We are mainly interested in \emph{strictly stationary solutions} of ($M3$) which are \emph{ergodic}. Therefore, we call a mild solution $(\psi(t)_{t \geq 0})$ \emph{strictly stationary}, if for all $k \in \mathbb{N}$ and for all arbitrary positive numbers $t_1, t_2, \dots, t_k$, the probability distribution of the $V^k$--valued random variable $\big( \psi(t_1+r), \psi(t_2+r), \dots, \psi(t_k+r) \big)$ does not depend on $r \geq 0$, i.e.
$$ Law \left( \psi(t_1+r), \psi(t_2+r), \dots, \psi(t_k+r) \right) = Law \left( \psi(t_1), \psi(t_2), \dots, \psi(t_k) \right) $$
for all $t_1, t_2, \dots, t_k, r \geq 0$. Here $Law(\cdot)$ denotes the probability distribution. We say that a strictly stationary mild solution $(\psi(t))_{t \geq 0}$ of ($M3$) is \emph{unique} if every strictly stationary mild solution of ($M3$) which is adapted to the natural filtration generated by the two--sided infinite--dimensional fractional Brownian motion (\ref{Sec4-eq1}) has the same distribution as $(\psi(t))_{t \geq 0}$. Further, we call a strictly stationary solution \emph{ergodic} if for all measurable functionals $\rho : V \to \mathbb{R}$ such that $\mathbb{E} \left(\vert \rho(\psi(0)) \vert \right) < \infty$ we have $\mathbb{P}$-a.s. 
$$ \lim_{T \to \infty} \frac{1}{T} \int_0^T \rho(\psi(t)) dt = \mathbb{E}(\rho(\psi(0))).$$
We have the following existence and uniqueness result.
\begin{thm}
Suppose Assumption \ref{Main-Assumption} holds and assume that there is $\epsilon > 0$ such that 
\begin{equation}
\sum_{k \in K} \lambda_k \alpha_k^{2(\epsilon- H)} < \infty.
\label{Sec4-eq3-2}
\end{equation}
Then there exists a unique ergodic mild solution $ \psi$ to equation ($M3$) given by
\begin{equation}
\psi(t)= \sum_{k \in K} \sqrt{\lambda_k} \nu^H \int_{- \infty}^t e^{-(t-u) \nu \alpha_k} d\beta_k^H(u) e_k, \ t \in \mathbb{R}.
\label{Sec4-eq4}
\end{equation}
\label{Sec4-thm1}
\end{thm}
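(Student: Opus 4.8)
The statement has two halves: (a) the series \eqref{Sec4-eq4} converges in $L^2(\Omega,V)$ and defines a strictly stationary mild solution of ($M3$), and (b) this solution is unique in the class of adapted strictly stationary mild solutions and is ergodic. The plan is to treat the equation mode by mode. Writing $\psi(t)=\sum_{k\in K}Y_k(t)e_k$, projecting ($M3$) onto the basis vector $e_k$ turns it into the scalar fractional Langevin equation \eqref{Sec3-eq1} with parameters $\alpha\rightsquigarrow\alpha_k$, $\lambda\rightsquigarrow\lambda_k$, $\nu\rightsquigarrow\nu$, driven by $\beta_k^H$; hence Proposition~\ref{Section3-Prop1} applies to each coordinate $Y_k(t)=\sqrt{\lambda_k}\nu^H\int_{-\infty}^t e^{-(t-u)\nu\alpha_k}\,d\beta_k^H(u)$.

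First I would establish convergence of the series. By the independence/pairing structure of the $\beta_k^H$ (different $\beta_k^H$ independent except $\beta_{-k}^H=(\beta_k^H)^*$), the coordinate processes $Y_k$ are uncorrelated in the relevant pairs, so $\mathbb{E}|\psi(t)|_V^2=\sum_{k\in K}\mathbb{E}|Y_k(t)|^2$. By Proposition~\ref{Section3-Prop1}(i), $\mathbb{E}|Y_k(t)|^2=\mathrm{Var}(Y_k(t))=\lambda_k\Gamma(2H)H/\alpha_k^{2H}$ (up to the complex normalization factor), and since $\alpha_k\geq c>0$ we may bound $\alpha_k^{-2H}\le c^{-2\epsilon}\alpha_k^{2(\epsilon-H)}$ when $\epsilon\le H$, or directly absorb constants, so that $\sum_k\mathbb{E}|Y_k(t)|^2\lesssim\sum_k\lambda_k\alpha_k^{2(\epsilon-H)}<\infty$ by hypothesis \eqref{Sec4-eq3-2} (for $\epsilon>H$ one still has $\alpha_k^{-2H}\le c^{-2H}$ and the even stronger summability $\sum_k\lambda_k<\infty$ follows a fortiori). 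This gives convergence in $L^2(\Omega,V)$ of \eqref{Sec4-eq4}. To verify that $\psi$ is a \emph{mild} solution, i.e. satisfies \eqref{Sec4-eq2}–\eqref{Sec4-eq3}, I would split the integral $\int_{-\infty}^t=\int_{-\infty}^0+\int_0^t$ coordinatewise: the $\int_0^t$ part is exactly the $k$-th summand of \eqref{Sec4-eq3}, while $e^{-t\nu\alpha_k}\sqrt{\lambda_k}\nu^H\int_{-\infty}^0 e^{u\nu\alpha_k}d\beta_k^H(u)=e^{-t\nu\alpha_k}Y_k(0)=(S(t)\psi(0))_k$ once we set $\psi(0)=\sum_k Y_k(0)e_k$; one checks $\psi(0)\in V$ by the same variance bound, and measurability is inherited from the scalar construction.

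Next, strict stationarity: for fixed times $t_1,\dots,t_n$ the vector $(\psi(t_1+r),\dots,\psi(t_n+r))$ is Gaussian, so it suffices to show its covariance structure is independent of $r$; but $\mathrm{Cov}(Y_k(t_i+r),Y_k(t_j+r))$ depends only on $t_i-t_j$ by Proposition~\ref{Section3-Prop1}(i), and the cross-covariances between different modes vanish by independence, so the finite-dimensional laws are $r$-invariant. For uniqueness, I would argue that any adapted strictly stationary mild solution $\tilde\psi$ must have coordinates $\tilde Y_k=\langle\tilde\psi,e_k\rangle_V$ that are adapted strictly stationary solutions of the scalar fractional Langevin equation \eqref{Sec3-eq1} with the $k$-th parameters; by the uniqueness statement quoted from Cheridito–Kawaguchi–Maejima \cite{Cheridito03} recalled before Proposition~\ref{Section3-Prop1}, each $\tilde Y_k$ equals $Y_k$ in law, and since everything is jointly Gaussian with the same covariances the laws of $\tilde\psi$ and $\psi$ coincide. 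Finally, ergodicity: the stationary process $t\mapsto\psi(t)$ generates a measure-preserving flow on path space; since each $Y_k$ has spectral density (from \eqref{Sec3-eq2}) $\propto\lambda_k|x|^{1-2H}/((\nu\alpha_k)^2+x^2)$ which is absolutely continuous, the stationary Gaussian process $\psi$ has purely absolutely continuous spectrum and is therefore mixing, hence ergodic, so Birkhoff's theorem gives the stated time-average identity for every $\rho\in L^1$.

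The main obstacle I expect is not the convergence bound (which is a direct consequence of \eqref{Sec4-eq3-2} and Proposition~\ref{Section3-Prop1}(i)) but the care needed in (i) justifying that the formal coordinatewise manipulations — splitting the stochastic integral over $(-\infty,t]$, interchanging the infinite sum with the Wiener-type integral, and identifying $e^{-t\nu\alpha_k}Y_k(0)$ with the semigroup term — are legitimate in $L^2(\Omega,V)$, which requires a uniform-in-$t$ (on compacts) version of the variance estimate together with Proposition~\ref{Section3-Prop1}(iii) to control increments, and (ii) reducing the infinite-dimensional uniqueness/ergodicity claims to the one-dimensional results of \cite{Cheridito03}: one must check that adaptedness to the filtration of the infinite-dimensional $B^H$ restricts correctly to adaptedness of each coordinate to the filtration of $\beta_k^H$, and that mixing of each coordinate upgrades to mixing of the product system — both of which follow from the independence structure in Assumption~\ref{Main-Assumption}(v) but should be stated explicitly.
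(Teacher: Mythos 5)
Your proposal is correct in substance but follows a genuinely different route from the paper. The paper does not construct the solution mode by mode: it obtains existence, stationarity and ergodicity in one stroke by invoking Theorems 3.1 and 4.6 of Maslowski--Pospisil \cite{Maslowski08}, and the only thing it actually verifies is the Hilbert--Schmidt bound $\vert S(t)Q^{\frac{1}{2}}\vert_{\mathcal{L}_2(V)}\leq C t^{-\gamma}\bigl(\sum_k \lambda_k(\nu\alpha_k)^{-2\gamma}\bigr)^{1/2}<\infty$ for $\gamma\geq\max\{0,H-\epsilon\}$, which is exactly where hypothesis \eqref{Sec4-eq3-2} and the interpolation inequality $\vert(-\nu A)^\gamma S(t)\vert_{\mathcal{L}(V)}\leq Ct^{-\gamma}$ enter; the representation \eqref{Sec4-eq4} is then read off from the mild-solution formula plus stationarity. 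Your hands-on construction (summing the coordinate variances from Proposition \ref{Section3-Prop1}(i), splitting $\int_{-\infty}^{t}=\int_{-\infty}^{0}+\int_0^t$ to exhibit the semigroup term, checking $r$-invariance of Gaussian covariances, and deducing mixing from the Riemann--Lebesgue decay of the spectral densities in \eqref{Sec3-eq2}) is self-contained and essentially re-proves the relevant parts of \cite{Maslowski08} in this diagonal setting; what the paper's route buys is brevity and no need to justify the coordinatewise interchanges you rightly list as the delicate points.

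One step of your plan needs repair: the uniqueness argument. You propose to reduce to the one-dimensional uniqueness of \cite{Cheridito03} by claiming that adaptedness of $\widetilde\psi$ to the filtration of the full $B^H$ ``restricts correctly'' to adaptedness of each coordinate $\widetilde Y_k=\langle\widetilde\psi,e_k\rangle_V$ to the natural filtration of $\beta_k^H$. That implication is false as stated: a $\sigma(B^H_s,\,s\leq t)$-measurable random variable need not be $\sigma(\beta_k^H(s),\,s\leq t)$-measurable, so the scalar uniqueness theorem does not apply verbatim (one would have to check that it tolerates the enlarged filtration, which the independence of the other modes does permit, but this is an extra argument). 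The paper sidesteps this entirely with a two-line observation you should adopt: any two mild solutions satisfy $\psi(t)-\widetilde\psi(t)=S(t)(\psi(0)-\widetilde\psi(0))$, hence $\vert\psi(t)-\widetilde\psi(t)\vert_V\leq e^{-tc\nu}\vert\psi(0)-\widetilde\psi(0)\vert_V\to 0$, and combined with strict stationarity of both processes this forces equality of their laws. With that substitution your argument closes completely.
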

\begin{proof}
The existence of a strictly stationary mild solution which is ergodic follows by Theorem 3.1 and Theorem 4.6 in \cite{Maslowski08}. We only remark that Theorem 3.1 and Theorem 4.6 in \cite{Maslowski08} are applicable since we have in particular
\begin{equation*}
\begin{split}
\vert S(t)Q^{\frac{1}{2}} &\vert_{\mathcal{L}^2(V)} =  \vert (-\nu A)^\gamma (-\nu A)^{- \gamma} S(t)Q^{\frac{1}{2}} \vert_{\mathcal{L}^2(V)}  = \vert (-\nu A)^\gamma S(t)(-\nu A)^{- \gamma} Q^{\frac{1}{2}} \vert_{\mathcal{L}^2(V)} \\
		  &\leq \vert (-\nu A)^\gamma S(t) \vert_{\mathcal{L}(V)}  \vert (-\nu A)^{- \gamma} Q^{\frac{1}{2}} \vert_{\mathcal{L}^2(V)} = \vert (-\nu A)^\gamma S(t) \vert_{\mathcal{L}(V)} \left( \sum_{k \in K} \frac{\lambda_k}{(\nu \alpha_k)^{2 \gamma}} \right)^{\frac{1}{2}} \\
		  &\leq C t^{- \gamma} \left( \sum_{k \in K}  \frac{\lambda_k}{(\nu \alpha_k)^{2 \gamma}} \right)^{\frac{1}{2}} < \infty
\end{split}
\end{equation*}
for any $\gamma \in [ \max \{0,H - \epsilon \}, \infty)$, a constant $C> 0$ and any $t > 0$. Here we used (\ref{Sec4-eq3-2}) and the well--known interpolation inequality $\vert (-\nu A)^\gamma S(t) \vert_{\mathcal{L}(V)} \leq C t^{- \gamma}$, see e.g. Theorem 2.6.3 in \cite{Pazy83}. \\
Now assume that we have two strictly stationary mild solutions $\psi$ and $\widetilde{\psi}$ to equation ($M3$). Notice that
$$ \vert \psi(t)- \widetilde{\psi}(t) \vert_V= \vert S(t)(\psi(0) - \widetilde{\psi}(0)) \vert_V \leq e^{-t \nu \inf_{k \in K} \alpha_k} \vert \psi(0) - \widetilde{\psi}(0) \vert_V \to 0 \quad \text{ for } t \to \infty $$
$\mathbb{P}$-a.s. and this implies uniqueness in the sense of our definition. The representation (\ref{Sec4-eq4}) of the mild solution is just the consequence of the definition of a mild solution and stationarity.
\end{proof}
The next remark will be useful in this and the following sections.
\begin{rem}
Let $\psi$ be the unique ergodic mild solution to equation ($M3$) given by (\ref{Sec4-eq4}). For any $t \in \mathbb{R}$ and $k \in K$ we set in the following
\begin{equation*}
\begin{split}
\hat{\psi}_k (t) &:= \sqrt{\lambda_k} \nu^H \int_{-\infty}^t e^{-(t-u) \nu \alpha_k} d\beta_k^H(u) \\
	&= \sqrt{\frac{\lambda_k}{2}} \nu^H \int_{-\infty}^t e^{-(t-u) \nu \alpha_k} d\emph{Re}(\beta_k^H)(u) + i \sqrt{\frac{\lambda_k}{2}} \nu^H \int_{-\infty}^t e^{-(t-u) \nu \alpha_k} d\emph{Im}(\beta_k^H)(u) \\
	&=: \hat{\psi}_{k,Re}(t) + i \hat{\psi}_{k,Im}(t)
\end{split}
\end{equation*}
and therefore $\psi(x,t)= \sum_{k \in K} \hat{\psi}_k(t) e_k(x)$, $t \in \mathbb{R}$, $x \in \mathbb{T}^2$. Further, observe that by Assumption \ref{Main-Assumption} we have \\
\begin{equation*}
\begin{split}
(\hat{\psi}_k (t))^* &= \Big( \sqrt{\lambda_k} \nu^H \int_{-\infty}^t e^{-(t-u) \nu \alpha_k} d\beta_k^H(u) \Big)^* = \sqrt{\lambda_k} \nu^H \int_{-\infty}^t e^{-(t-u) \nu \alpha_k} d(\beta_k^H)^*(u) \\
			&= \sqrt{\lambda_{-k}} \nu^H \int_{-\infty}^t e^{-(t-u) \nu \alpha_{-k}} d\beta_{-k}^H(u) = \hat{\psi}_{-k}(t).
\end{split}
\end{equation*}
In particular, for any $s,t \in \mathbb{R}$ and $k,k' \in K$ we obtain 
\begin{equation*}
\mathbb{E} \big(\hat{\psi}_k(t) (\hat{\psi}_{k'}(s))^*  \big) =
\begin{cases}
																								2 \mathbb{E} \big(\hat{\psi}_{k,Re}(t) \hat{\psi}_{k,Re}(s) \big) = 2 \mathbb{E} \big(\hat{\psi}_{k,Im}(t) \hat{\psi}_{k,Im}(s) \big) & \text{if } k=k'\\
																								0 & \text{if } k \neq k'
\end{cases}
\end{equation*}
and 
\begin{equation*}
\mathbb{E} \big( \big\vert \hat{\psi}_k(t) - \hat{\psi}_k(s) \big\vert^2  \big) = 2 \mathbb{E} \big( \big\vert \hat{\psi}_{k,Re}(t) - \hat{\psi}_{k,Re}(s) \big\vert^2  \big) = 2 \mathbb{E} \big( \big\vert \hat{\psi}_{k,Im}(t) - \hat{\psi}_{k,Im}(s) \big\vert^2  \big).
\end{equation*}
Therefore, to compute $\mathbb{E} \big(\hat{\psi}_k(t) (\hat{\psi}_{k'}(s))^*  \big)$ or $\mathbb{E} \big( \big\vert \hat{\psi}_k(t) - \hat{\psi}_k(s) \big\vert^2  \big)$ we only have to compute the associated real part and multiply it by two.
\label{Sec4-Rem1}
\end{rem}
\begin{thm}
Suppose Assumption \ref{Main-Assumption} holds. Further, assume that there is $m \in \mathbb{N}_0$ and $\gamma \in (0,1)$ such that 
\begin{equation}
\sum_{k \in K} \lambda_k \alpha_k^{2 \gamma - 2H} \vert k \vert^{2m} < \infty \quad \text{and} \quad \sum_{k \in K} \lambda_k \alpha_k^{- 2H} \vert k \vert^{2m+2 \gamma} < \infty. 
\label{Sec4-eq5}
\end{equation}
Then there is a unique ergodic mild solution $\psi$ to equation ($M3$). Further, for all $\delta \in \mathbb{N}_0^2$ with $\vert \delta \vert \leq m$ there is a version of $D^\delta \psi$ (again denoted by $D^\delta \psi$) such that 
\begin{equation*}
D^\delta \psi \in C^\epsilon \big(\mathbb{T}^2 \times \mathbb{R} \big)
\end{equation*}
$\mathbb{P}$-a.s. for any $\epsilon \in (0, \min \{ \gamma, H \})$. In particular, $D^\delta \psi$ is real--valued. \\
\label{Sec4-thm2}
\end{thm}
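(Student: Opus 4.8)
The plan is to separate the two claims. Existence and uniqueness of the ergodic mild solution $\psi$ with the representation (\ref{Sec4-eq4}) follows at once from Theorem \ref{Sec4-thm1}: since $m\ge 0$ and $|k|\ge 2\pi>1$ for every $k\in K$, the first sum in (\ref{Sec4-eq5}) dominates $\sum_{k\in K}\lambda_k\alpha_k^{2\gamma-2H}$, which is exactly condition (\ref{Sec4-eq3-2}) with $\epsilon:=\gamma\in(0,1)$. For the regularity statement, I would fix $\delta\in\mathbb{N}_0^2$ with $|\delta|\le m$, abbreviate $(ik)^\delta:=(ik_1)^{\delta_1}(ik_2)^{\delta_2}$, and work with the formally differentiated field $D^\delta\psi(x,t):=\sum_{k\in K}\hat\psi_k(t)(ik)^\delta e_k(x)$. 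First one checks that this series converges in $L^2(\Omega)$ at each $(x,t)$: by the orthogonality relation of Remark \ref{Sec4-Rem1} the partial sums have squared $L^2$-norm $\sum_{k}|k^\delta|^2\,\mathbb{E}|\hat\psi_k(t)|^2=\Gamma(2H)H\sum_{k}\lambda_k\alpha_k^{-2H}|k^\delta|^2$, which is finite by the second sum in (\ref{Sec4-eq5}) (using Proposition \ref{Section3-Prop1}(i) for $\mathbb{E}|\hat\psi_k(t)|^2=\lambda_k\Gamma(2H)H\alpha_k^{-2H}$ and $|\delta|\le m$, $|k|\ge 2\pi$).

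The heart of the proof is an estimate of the second moment of the increments of $D^\delta\psi$. Since $\hat\psi_{-k}=\hat\psi_k^*$ and $e_{-k}=e_k^*$, the orthogonality relations of Remark \ref{Sec4-Rem1} collapse the double sum to the diagonal, so that for all $(x,t),(y,s)\in\mathbb{T}^2\times\mathbb{R}$
\[
\mathbb{E}\big|D^\delta\psi(x,t)-D^\delta\psi(y,s)\big|^2=\sum_{k\in K}|k^\delta|^2\,\mathbb{E}\big|\hat\psi_k(t)e_k(x)-\hat\psi_k(s)e_k(y)\big|^2 .
\]
Each summand is at most $2\,\mathbb{E}|\hat\psi_k(t)-\hat\psi_k(s)|^2+2\,\mathbb{E}|\hat\psi_k(s)|^2\,|e_k(x)-e_k(y)|^2$ since $|e_k|\equiv 1$. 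For the time increment I invoke Remark \ref{Sec4-Rem1} and Proposition \ref{Section3-Prop1}(iii) applied to the stationary fractional Ornstein--Uhlenbeck process $\hat\psi_{k,Re}$ (whose parameters are $\lambda_k/2$ and $\alpha_k$), giving $\mathbb{E}|\hat\psi_k(t)-\hat\psi_k(s)|^2\le C\,\lambda_k\alpha_k^{2\rho-2H}|t-s|^{2\rho}$ for any $\rho\in(0,H]$. For the spatial factor I use $|e_k(x)-e_k(y)|^2=4\sin^2(\langle k,x-y\rangle/2)\le C\,|k|^{2\rho}|x-y|^{2\rho}$ for any $\rho\in[0,1]$, together with $\mathbb{E}|\hat\psi_k(s)|^2=\lambda_k\Gamma(2H)H\alpha_k^{-2H}$. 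Choosing $\rho:=\min\{\gamma,H\}$ and using $|\delta|\le m$, $|k|\ge 2\pi$ and $\alpha_k\ge c$ to absorb the surplus powers of $\alpha_k$ and $|k|$, both resulting series are dominated by the two sums in (\ref{Sec4-eq5}), whence
\[
\mathbb{E}\big|D^\delta\psi(x,t)-D^\delta\psi(y,s)\big|^2\le C\big(|t-s|^{2\rho}+|x-y|^{2\rho}\big)\le 2C\big(|x-y|^2+|t-s|^2\big)^{\rho}.
\]

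To conclude, I would use that $D^\delta\psi$ is a Gaussian field, so the $L^p$-norm of an increment is controlled by its $L^2$-norm, giving $\mathbb{E}|D^\delta\psi(u)-D^\delta\psi(v)|^p\le C_p\,|u-v|^{p\rho}$ for every $p\ge 2$; applying Kolmogorov's continuity theorem on the three--dimensional parameter set $\mathbb{T}^2\times[-T,T]$ with $p$ so large that $p\rho>3$ yields a continuous modification that is $\epsilon$-H\"older for every $\epsilon<\rho-3/p$. Letting $p\to\infty$ (the modifications for different $p$ are indistinguishable, being continuous versions of one field) and patching over $T\in\mathbb{N}$ produces a single version, still denoted $D^\delta\psi$, lying in $C^\epsilon(\mathbb{T}^2\times\mathbb{R})$ for all $\epsilon\in(0,\min\{\gamma,H\})$. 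Real--valuedness is clear because in each symmetric partial sum the $k$- and $(-k)$-terms are complex conjugates, so the sum is real, hence so is its $L^2$-limit and the continuous version; on the common full--measure set one also checks, via the fundamental theorem of calculus for the (uniformly convergent) Fourier series, that these finitely many versions are the genuine $\delta$-th spatial derivatives of $\psi(\cdot,t)$. I expect the increment estimate to be the main obstacle: one has to line up the H\"older exponent $\rho$ on the time side, where $\rho\le H$ is forced by Proposition \ref{Section3-Prop1}(iii), with the space side, where $\rho\le\gamma$ comes from the summability hypothesis, and to verify that the uniform lower bound $\alpha_k\ge c$ together with $|k|\ge 2\pi$ really allow both tail series to be dominated by (\ref{Sec4-eq5}).
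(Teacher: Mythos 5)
Your proposal is correct and follows essentially the same route as the paper's proof: existence and uniqueness via Theorem \ref{Sec4-thm1}, a pointwise second--moment increment estimate split into a time part (Proposition \ref{Section3-Prop1}(iii) via Remark \ref{Sec4-Rem1}) and a space part (H\"older continuity of $e_k$), Gaussian moment bounds to raise the exponent, Kolmogorov's continuity theorem, and conjugate symmetry for real--valuedness. The only cosmetic differences are that the paper phrases the well--definedness of $D^\delta\psi$ in terms of $\mathcal{D}((-A)^\zeta)$--valued processes rather than pointwise $L^2(\Omega)$ convergence, works with exponents $\epsilon$ strictly below $\min\{\gamma,H\}$ instead of at the endpoint, and invokes Theorem 3.4 of \cite{DaPrato-Zabczyk-92} for the continuity step rather than carrying out the $p\to\infty$ patching explicitly.
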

\begin{proof}
By (\ref{Sec4-eq5}) and since Assumption \ref{Main-Assumption} holds Theorem \ref{Sec4-thm1} implies the existence of a unique ergodic mild solution $\psi$ to equation ($M3$). Further, for all $t \in \mathbb{R}$ and $x \in \mathbb{T}^2$ $\psi(x,t)$ is real--valued, since $(\hat{\psi}_k(t))^*= \hat{\psi}_{-k}(t)$ and therefore
\begin{equation*}
\begin{split}
(\psi(x,t))^* &= \Big( \sum_{k \in K} \hat{\psi}_k(t) e^{i<k,x>} \Big)^* = \sum_{k \in K} (\hat{\psi}_k(t))^* (e^{i<k,x>})^* \\
					 &= \sum_{k \in K} \hat{\psi}_{-k}(t) e^{i<-k,x>} = \sum_{k \in K} \hat{\psi}_k(t) e^{i<k,x>} = \psi(x,t).
\end{split}
\end{equation*}
Now let $m \in \mathbb{N}_0$ and $\delta = (\delta_1, \delta_2) \in \mathbb{N}_0^2$ with $\vert \delta \vert = \delta_1 + \delta_2 \leq m$. By (\ref{Sec4-eq5}) it is clear that the stochastic process $(D^\delta \psi(t))_{t \in \mathbb{R}}$, defined by the formal Fourier series
$$ D^\delta \psi(t)= \sum_{k \in K} \sqrt{\lambda_k} \nu^H \int_{- \infty}^t e^{-(t-u) \nu \alpha_k} d\beta_k^H(u) D^\delta e_k, \ t \in \mathbb{R}, $$
is a well--defined $\mathcal{D}((-A)^{\zeta})$--valued stochastic process for some $\zeta \in \mathbb{R}$. But $D^\delta \psi$ is in general a function (and not only a \emph{generalized function}) if $\zeta \geq 0$. However, $ 0 \leq \zeta \leq \gamma$ is already assured by
\begin{equation*}
\mathbb{E} \big( \vert D^\delta \psi(t) \vert^2_{(- A)^\zeta} \big) \leq \sum_{k \in K} \alpha_k^{2 \zeta} \mathbb{E}(\vert \hat{\psi}(t) \vert^2) \vert k \vert^{2m} = \Gamma(2H)H \sum_{k \in K} \lambda_k \alpha_k^{2 \zeta - 2H} \vert k \vert^{2m} < \infty, 
\end{equation*}
where we used $ \vert D^\delta e_k(x) \vert^2 \leq \vert k \vert^{2m} $, Remark \ref{Sec4-Rem1}, Proposition \ref{Section3-Prop1}(i) and (\ref{Sec4-eq5}). Notice also that $D^\delta \psi(x,t)$ is real--valued by using the same argument which leads us to conclude that $\psi(x,t)$ is real--valued. Again, by $ \vert D^\delta e_k(x) \vert^2 \leq \vert k \vert^{2m} $, Remark \ref{Sec4-Rem1}, Proposition \ref{Section3-Prop1}(iii) and (\ref{Sec4-eq5}) we obtain for all $t,s \in \mathbb{R}$ and $x \in \mathbb{T}^2$ 
\begin{equation*}
\begin{split}
\mathbb{E} \big( \vert D^\delta \psi(x,t)- D^\delta \psi(x,s) \vert^2 \big) &= \sum_{k \in K} \mathbb{E} \big( \vert \hat{\psi}_k(t)- \hat{\psi}_k(s) \vert^2 \big) \vert D^\delta e_k(x) \vert^2 \\
		&\leq \sum_{k \in K} \vert k \vert^{2m} \mathbb{E} \big( \vert \hat{\psi}_k(t)- \hat{\psi}_k(s) \vert^2 \big) \\
		&\leq C(H, \nu, \epsilon) \sum_{k \in K} \lambda_k \alpha_k^{2 \epsilon - 2H} \vert k \vert^{2m} \vert t-s \vert^{2 \epsilon} < \infty
\end{split}
\end{equation*} 
for any $ \epsilon \in (0, \min \{ \gamma, H \})$ and some constant $C(H, \nu, \epsilon) >0$. Similarly, using (\ref{Sec4-eq5}) and
$$\vert D^\delta e_k(x) - D^\delta e_k(y) \vert \leq \vert k \vert^m \vert e_k(x) - e_k(y) \vert  \leq C(\eta) \vert k \vert^{m+ \eta} \vert x-y \vert^\eta $$
for any $\eta \in (0,1)$ and a constant $C(\eta)>0$, we get for all $t \in \mathbb{R}$ and $x,y \in \mathbb{T}^2$
\begin{equation*}
\begin{split}
\mathbb{E} \big( \vert D^\delta \psi(x,t)- D^\delta \psi(y,t) \vert^2 \big) &= \sum_{k \in K} \mathbb{E} \big( \vert \hat{\psi}_k(t) \vert^2 \big) \vert D^\delta e_k(x) - D^\delta e_k(y) \vert^2 \\
		&\leq C(\epsilon)  \Gamma(2H)H \sum_{k \in K} \lambda_k \alpha_k^{ - 2H} \vert k \vert^{2m +2 \epsilon} \vert x-y \vert^{2 \epsilon} < \infty
\end{split}
\end{equation*}
for any $ \epsilon \in (0, \gamma)$ and a constant $C(\epsilon)>0$. Therefore, we obtain for all $t,s \in \mathbb{R}$ and $x,y \in \mathbb{T}^2$
\begin{equation*}
\mathbb{E} \big( \vert D^\delta \psi(x,t)-D^\delta \psi(y,s) \vert^2 \big) \leq C(H, \nu, \delta, \epsilon) \big( \vert t-s \vert^{2 \epsilon} + \vert x-y \vert^{2 \epsilon} \big) 
\end{equation*}
for any $ \epsilon \in (0, \min \{ \gamma , H \})$ and a constant $C(H , \nu, \delta, \epsilon) >0$. As $D^\delta \psi(x,t)$ is a normal real--valued random variable, we have
\begin{equation*}
\begin{split}
\mathbb{E} \big( \vert D^\delta \psi(x,t)- D^\delta \psi(y,s) \vert^{2n} \big) &\leq C(H, \nu, \delta, \epsilon, n) \big( \vert t-s \vert^{2\epsilon n} + \vert x-y \vert^{2\epsilon n} \big) \\
		&\leq C(H, \nu, \delta, \epsilon, n) \big( \vert t-s \vert^2 + \vert x-y \vert^2 \big)^{\epsilon n}
\end{split}
\end{equation*}
for all $t,s \in \mathbb{R}$, $x,y \in \mathbb{T}^2$, $n \in \mathbb{N}$, $ \epsilon \in (0, \min \{ \gamma , H \})$ and a constant $ C(H, \nu, \delta, \epsilon, n) >0$. Theorem 3.4 in \cite{DaPrato-Zabczyk-92} implies now that there is a version of $D^\delta \psi$ (again denoted by $D^\delta \psi$) such that $\mathbb{P}$-a.s.
$$ D^\delta \psi \in C^\epsilon \big( \mathbb{T}^2 \times \mathbb{R} \big) $$
for any $\epsilon \in (0, \min \{ \gamma, H \})$.
\end{proof}
\begin{rem}
In \cite{Nualart09} Nualart and Viens established an analogue regularity assertion as in Theorem \ref{Sec4-thm2} for the mild solution of the fractional stochastic heat equation on the circle, but they did not consider the partial derivatives of the mild solution. 
\end{rem}
\begin{cor}
Suppose all assumptions of Theorem \ref{Sec4-thm2} hold. In particular, assume that there is $m \in \mathbb{N}_0$ and $\gamma \in (0,1)$ such that (\ref{Sec4-eq5}) is satisfied and let $\psi$ be the unique ergodic mild solution to equation ($M3$). Then there is a version of $\psi$ (again denoted by $\psi$) such that $\mathbb{P}$-a.s.
\begin{equation*}
\psi \in C(\mathbb{R}, C^{m}(\mathbb{T}^2)).
\end{equation*}
Further, for all $- \infty < T_1 < T_2 < \infty$ there is a positive random variable \\
$K=K(H, \nu, m, \gamma , T_1,T_2): \Omega \to [0, \infty)$ with $ \mathbb{E} \big( K^2 \big) < \infty $
such that $\mathbb{P}$-a.s.
$$ \vert \psi(\omega) \vert_{C([T_1,T_2],C^m (\mathbb{T}^2))} \leq K(\omega). $$
\label{Sec4-Cor1}
\end{cor}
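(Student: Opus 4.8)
The plan is to read off both assertions from Theorem~\ref{Sec4-thm2} and the moment estimates established in its proof. For the first assertion, note that by Theorem~\ref{Sec4-thm2}, for every multi-index $\delta$ with $|\delta|\le m$ there is a version of $D^\delta\psi$ which $\mathbb{P}$-a.s. lies in $C^\epsilon(\mathbb{T}^2\times\mathbb{R})$ for $\epsilon\in(0,\min\{\gamma,H\})$; intersecting the finitely many full-probability events I fix one event of probability one on which all these versions are continuous. On that event the continuous version $\tilde\psi_\delta$ of $D^\delta\psi$ is the distributional $\delta$-th derivative of the continuous version $\tilde\psi$ of $\psi$ -- both are given by the same Fourier series, with the coefficients of $\tilde\psi_\delta(\cdot,t)$ equal to $(ik_1)^{\delta_1}(ik_2)^{\delta_2}$ times those of $\tilde\psi(\cdot,t)$, and the series converges in $V$ by Theorem~\ref{Sec4-thm1} -- hence $\tilde\psi(\cdot,t)\in C^m(\mathbb{T}^2)$ for every $t$. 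Continuity of $t\mapsto\tilde\psi(\cdot,t)$ in $C^m(\mathbb{T}^2)$ then follows from $\sup_{x\in\mathbb{T}^2}|\tilde\psi_\delta(x,t)-\tilde\psi_\delta(x,s)|\le[\tilde\psi_\delta]_{C^\epsilon}\,|t-s|^\epsilon\to0$ for each $|\delta|\le m$, so that $\psi\in C(\mathbb{R},C^m(\mathbb{T}^2))$ $\mathbb{P}$-a.s.

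For the second assertion, fix $-\infty<T_1<T_2<\infty$ and set $K:=|\psi|_{C([T_1,T_2],C^m(\mathbb{T}^2))}=\max_{|\delta|\le m}\sup_{(x,t)\in\mathbb{T}^2\times[T_1,T_2]}|D^\delta\psi(x,t)|$, which is $\mathbb{P}$-a.s. finite by the first part. The proof of Theorem~\ref{Sec4-thm2} yields, for every $n\in\mathbb{N}$, a constant $C(H,\nu,\delta,\epsilon,n)$ with
$$\mathbb{E}\big(|D^\delta\psi(x,t)-D^\delta\psi(y,s)|^{2n}\big)\le C(H,\nu,\delta,\epsilon,n)\,\big(|t-s|^2+|x-y|^2\big)^{\epsilon n}$$
for all $(x,t),(y,s)\in\mathbb{T}^2\times\mathbb{R}$. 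Since $\mathbb{T}^2\times[T_1,T_2]$ has dimension $3$, choosing $n$ so large that $2\epsilon n>3$ and applying a quantitative form of the Kolmogorov--Chentsov theorem (equivalently, the Garsia--Rodemich--Rumsey inequality) on a finite cover of $\mathbb{T}^2\times[T_1,T_2]$ by cubes gives $\mathbb{E}\big(\sup_{(x,t)}|D^\delta\psi(x,t)|^{2n}\big)<\infty$ for each $\delta$, where one also uses that $D^\delta\psi(x_0,t_0)$ is Gaussian, so $\mathbb{E}|D^\delta\psi(x_0,t_0)|^{2n}<\infty$ by Remark~\ref{Sec4-Rem1} and Proposition~\ref{Section3-Prop1}(i). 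Taking the maximum over the finitely many $\delta$ yields $\mathbb{E}(K^{2n})<\infty$, and hence $\mathbb{E}(K^2)\le(\mathbb{E}(K^{2n}))^{1/n}<\infty$ by Jensen's inequality.

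I expect the delicate point to be exactly this last step: Theorem~3.4 in \cite{DaPrato-Zabczyk-92}, as used in Theorem~\ref{Sec4-thm2}, only provides almost-sure continuity, so obtaining a moment bound on the supremum requires either quoting a quantitative refinement of it or re-running its dyadic-chaining argument while keeping track of $L^{2n}$-norms. A cleaner alternative avoids the issue entirely: the restriction of $\psi$ to $[T_1,T_2]$ is a centered Gaussian random element of the separable Banach space $C([T_1,T_2],C^m(\mathbb{T}^2))$ -- the point evaluations $D^\delta\psi(x,t)$, $|\delta|\le m$, are jointly Gaussian, being $L^2(\Omega)$-limits of linear combinations of the Gaussian variables $\hat\psi_{k,Re}$, $\hat\psi_{k,Im}$, and they generate the Borel $\sigma$-algebra of this separable space -- so Fernique's theorem gives $\mathbb{E}\exp(cK^2)<\infty$ for some $c>0$, and in particular $\mathbb{E}(K^2)<\infty$.
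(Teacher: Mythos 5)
Your main line of argument is essentially the paper's own proof. The paper resolves exactly the ``delicate point'' you flag by invoking Theorem~1.4.1 in \cite{Kunita90}, which is precisely the quantitative form of the Kolmogorov--Chentsov continuity theorem for random fields you ask for: under the condition $2\epsilon l>3$ it produces a version together with a random H\"older constant $K$ satisfying $\mathbb{E}(K^{2l})<\infty$ over $\mathbb{T}^2\times[T_1,T_2]$, and the paper then bounds the supremum by $K(\omega)\,C(\beta,T_1,T_2)+\vert D^\delta\psi(x_0,t_0)(\omega)\vert$ and checks $\mathbb{E}(\vert D^\delta\psi(x_0,t_0)\vert^2)<\infty$ via Proposition~\ref{Section3-Prop1}(i) and Remark~\ref{Sec4-Rem1} --- the same two ingredients (quantitative chaining estimate plus a fixed-point moment) that you assemble. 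Your first paragraph is also in the spirit of the paper, though the paper obtains $\psi\in C(\mathbb{R},C^m(\mathbb{T}^2))$ directly from the Kunita version rather than by patching together the $C^\epsilon(\mathbb{T}^2\times\mathbb{R})$ versions from Theorem~\ref{Sec4-thm2}; your identification of the continuous version of $D^\delta\psi$ with the $\delta$-th derivative of the continuous version of $\psi$ is a small extra step that needs the remark that both are limits of the same Fourier partial sums, which you supply. The genuinely different element is your closing alternative via Fernique's theorem: viewing $\psi\vert_{[T_1,T_2]}$ as a centered Gaussian random element of the separable Banach space $C([T_1,T_2],C^m(\mathbb{T}^2))$ immediately yields $\mathbb{E}\exp(cK^2)<\infty$, which is strictly stronger than the stated $\mathbb{E}(K^2)<\infty$ and bypasses the chaining bookkeeping entirely; its only cost is that one must first know the sample paths land in that space almost surely (your first paragraph) and verify Gaussianity of the cylinder evaluations, both of which you do. Either route is sound.
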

\begin{proof}
Recall from the proof of Theorem \ref{Sec4-thm2} that for any $\delta \in \mathbb{N}_0^2$, $\vert \delta \vert \leq m$, $\epsilon \in (0, \min \{ \gamma, H \})$, $k \in \mathbb{N}$ with $\epsilon k \geq 1$ and $s,t \in \mathbb{R}$, $x=(x_1,x_2),y=(y_1,y_2) \in \mathbb{T}^2$ we have
\begin{equation*}
\begin{split}
\mathbb{E} \big( \vert D^\delta \psi(x,t) - D^\delta \psi(y,s) \vert^{2k} \big) &\leq C(H, \nu, m, \delta, \epsilon, k) \big( \vert t-s \vert^{ \epsilon 2k} + \vert x-y \vert^{ \epsilon 2k} \big) \\
			&\leq \tilde{C}(H, \nu, m, \delta, \epsilon, k) \big( \vert t-s \vert^{ \epsilon 2k} + \vert x_1-y_1 \vert^{ \epsilon 2k} + \vert x_2-y_2 \vert^{ \epsilon 2k} \big)
\end{split}
\end{equation*}
for some constants $C(H, \nu, m, \delta, \epsilon, k), \tilde{C}(H, \nu, m, \delta, \epsilon, k) > 0 $. By Theorem 1.4.1 in \cite{Kunita90} (\emph{Kolmogorov's continuity theorem for random fields}) for $l \in \mathbb{N}$, $\epsilon 2l > 3$, $- \infty < T_1 < T_2 < \infty$ and any
$$ 0 < \beta < \frac{\epsilon 2 l}{2 l} \left( \frac{\frac{3}{3 \frac{1}{\epsilon 2 l}}-3}{\frac{3}{3 \frac{1}{\epsilon 2 l}}} \right) = \epsilon \left( \frac{\epsilon 2 l - 3}{\epsilon 2l} \right) < \min \{ \gamma, H \}, $$
there is a positive random variable $K=K(H, \nu, m, \delta, \epsilon, \beta, l, T_1, T_2): \Omega \to [0, \infty)$
with 
\begin{equation}
\mathbb{E} \big(K^{2l} \big)< \infty 
\label{Sec4-eq7}
\end{equation}
and a version of $\psi$ (again denoted by $\psi$) such that for all $t,s \in [T_1, T_2]$ and $x=(x_1,x_2),y=(y_1,y_2) \in \mathbb{T}^2$ we have $\mathbb{P}$-a.s. 
\begin{equation*}
\vert D^\delta \psi(x,t)(\omega) - D^\delta \psi(y,s)(\omega) \vert \\ 
			\leq K(\omega) \big( \vert t-s \vert^\beta + \vert x_1-y_1 \vert^\beta + \vert x_2-y_2 \vert^\beta \big).
\end{equation*}
In particular, for $t,t_0 \in [T_1,T_2]$ and $x=(x_1,x_2),x_0=(x_{0,1}, x_{0,2}) \in \mathbb{T}^2$ we have $\mathbb{P}$-a.s. 
\begin{equation*}
\begin{split}
\vert D^\delta \psi(x,t)(\omega) \vert &\leq \vert D^\delta \psi(x,t)(\omega) - D^\delta \psi(x_0, t_0)(\omega) \vert + \vert D^\delta \psi(x_0, t_0)(\omega) \vert \\
	&\leq K(\omega) \big( \vert t-t_0 \vert^\beta + \vert x_1-x_{0,1} \vert^\beta + \vert x_2-x_{0,2} \vert^\beta \big) + \vert D^\delta \psi(t_0,x_0)(\omega) \vert
\end{split}
\end{equation*}
and therefore $\mathbb{P}$-a.s.
\begin{equation}
\sup_{t \in [T_1,T_2]} \sup_{x \in \mathbb{T}^2} \vert D^\delta \psi(x,t)(\omega) \vert \leq K(\omega) C(\beta, T_1, T_2) + \vert D^\delta \psi(x_0, t_0)(\omega) \vert
\label{Sec4-eq8}
\end{equation}
for some constant $ C(\beta, T_1, T_2) >0$. The assertions of the corollary follow now by (\ref{Sec4-eq7}), (\ref{Sec4-eq8}) and since we have
\begin{equation*}
\begin{split}
\mathbb{E} \big( \vert  D^\delta \psi(x_0, t_0) \vert^2 \big) &= \sum_{k \in K} \lambda_k \nu^{2H} \mathbb{E} \left( \big\vert \int_{- \infty}^{t_0} e^{-(t_0-u) \nu \alpha_k} d\beta_k^H(u)  \big\vert^2 \right) \vert  D^\delta e_k(x_0) \vert^2 \\
			&\leq \Gamma(2H)H \sum_{k \in K} \frac{\lambda_k }{ \alpha_k^{2H}} \vert k \vert^{2m} < \infty
\end{split}
\end{equation*}
for every $x_0 \in \mathbb{T}^2$, $t_0 \in \mathbb{R}$ and $\delta \in \mathbb{N}_0^2$, $ \vert \delta \vert \leq m$, where we used (\ref{Sec4-eq5}), Proposition \ref{Section3-Prop1}(i) and Remark \ref{Sec4-Rem1}.
\end{proof}
We apply Corollary \ref{Sec4-Cor1} to state an existence and uniqueness result for the transport equation ($M1$). Consider the differential equation ($M1$) as a first order system
\begin{equation}
\frac{d}{dt} \binom{x(t)}{\dot{x}(t)} = f_{\psi(\omega), \tau}(t, (x(t), \dot{x}(t))), \ \binom{x(0)}{\dot{x}(0)}= \binom{\overline{x}}{\overline{y}} \in \mathbb{T}^2 \times \mathbb{R}^2,
\label{Sec4-eq9}
\end{equation}
where $\tau > 0$ and $f_{\psi(\omega), \tau}: \mathbb{R} \times \mathbb{T}^2 \times \mathbb{R}^2 \rightarrow \mathbb{R}^4$ is defined by
\begin{equation}
(t, x, y ) \mapsto  f_{\psi(\omega), \tau}(t, x, y) = \begin{pmatrix}
	y \\
	\frac{1}{\tau} (\nabla^{\bot} \psi(x,t)(\omega) -y)
\end{pmatrix}. 
\label{Sec4-eq10}
\end{equation}
Here $\psi(\cdot, \cdot)(\omega)$ with $\omega \in \Omega$ denotes a realization of the ergodic mild solution of ($M3$).\\
We say that ($M1$) has a \emph{unique local} $C^m$\emph{--solution} $\mathbb{P}$-a.s. for some $m \in \mathbb{N}$ if for all $(\overline{x}, \overline{y}) \in \mathbb{T}^2 \times \mathbb{R}^2$ there is an open interval $I(\omega) \subseteq \mathbb{R}$ including $0$ and a function 
$$ \binom{x(\cdot)}{\dot{x}(\cdot)} \in C^1 \left(I(\omega), C^m \big(\mathbb{T}^2 \times \mathbb{R}^2, \mathbb{T}^2 \times \mathbb{R}^2 \big) \right), $$
which satisfies uniquely the equation (\ref{Sec4-eq9}) for all $t \in I(\omega)$ $\mathbb{P}$-a.s.. We say that ($M1$) has a \emph{unique global} $C^m$\emph{--solution} $\mathbb{P}$-a.s. for some $m \in \mathbb{N}$, if ($M1$) has a unique local $C^m$--solution with $I(\omega)= \mathbb{R}$ $\mathbb{P}$-a.s..
\begin{cor}
Suppose Assumption \ref{Main-Assumption} holds. Further, assume that there is $m \in \mathbb{N}$, $m \geq 2$, and $\gamma \in (0,1)$ such that 
\begin{equation}
\sum_{k \in K} \lambda_k \alpha_k^{2 \gamma - 2H} \vert k \vert^{2m} < \infty \quad \text{and} \quad \sum_{k \in K} \lambda_k \alpha_k^{- 2H} \vert k \vert^{2m+2 \gamma} < \infty. 
\label{Sec4-eq11}
\end{equation}
Then ($M1$) has a unique global $C^{m-1}$--solution $\mathbb{P}$-a.s..
\label{Sec4-Cor2}
\end{cor}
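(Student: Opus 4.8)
The plan is to read off from Corollary \ref{Sec4-Cor1} that, path by path, the velocity field $v(\cdot,\cdot)(\omega)=\nabla^{\bot}\psi(\cdot,\cdot)(\omega)$ is continuous in time and $C^{m-1}$ in space, so that (\ref{Sec4-eq9}) becomes a classical ODE to which the standard local existence, uniqueness and smooth-dependence theory applies; afterwards I would promote the local solution to a global one by an a priori bound on $|\dot x(t)|$ on finite time intervals.

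First I would fix $\omega$ outside a single $\mathbb{P}$--null set (independent of the initial datum) and apply Corollary \ref{Sec4-Cor1}, which is legitimate because $m\ge 2$ and (\ref{Sec4-eq11}) is exactly (\ref{Sec4-eq5}): after passing to a suitable version, $\psi(\omega)\in C(\mathbb{R},C^{m}(\mathbb{T}^2))$ and for every $-\infty<T_1<T_2<\infty$ one has $|\psi(\omega)|_{C([T_1,T_2],C^{m}(\mathbb{T}^2))}\le K(\omega)<\infty$ with $\mathbb{E}(K^2)<\infty$. Hence $v(\cdot,\cdot)(\omega)\in C(\mathbb{R},C^{m-1}(\mathbb{T}^2,\mathbb{R}^2))$, so the map $f_{\psi(\omega),\tau}$ of (\ref{Sec4-eq10}) is continuous in $t$, of class $C^{m-1}$ in $(x,y)$ (its $y$--dependence being affine), and all its partial derivatives in $(x,y)$ up to order $m-1$ are jointly continuous in $(t,x,y)$; in particular $f_{\psi(\omega),\tau}$ is locally Lipschitz in $(x,y)$ uniformly for $t$ in compact sets. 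By the Cauchy--Lipschitz (Picard--Lindel\"of) theorem, (\ref{Sec4-eq9}) has a unique local solution for each $(\overline x,\overline y)$; by the classical theorem on differentiable dependence on initial conditions the flow $(\overline x,\overline y)\mapsto(x(t),\dot x(t))$ is of class $C^{m-1}$, and since the right--hand side is continuous in $t$ the flow is $C^1$ in $t$. This yields a unique local $C^{m-1}$--solution in the sense defined above (note $m-1\ge 1$, so this is genuinely a $C^{m-1}$ statement).

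Next I would show the maximal interval of existence is all of $\mathbb{R}$. As the position component lives in the compact torus $\mathbb{T}^2$, a solution can only fail to continue if $|\dot x(t)|$ explodes in finite time. Fix a finite interval $[T_1,T_2]$ containing $0$ and put $M(\omega):=\sup_{t\in[T_1,T_2]}\sup_{x\in\mathbb{T}^2}|v(x,t)(\omega)|$, which is finite since $|v|\le C\,|\psi|_{C^1(\mathbb{T}^2)}\le C\,K(\omega)$. Writing ($M1$) in Duhamel form for $\dot x$,
\begin{equation*}
\dot x(t)=e^{-t/\tau}\,\overline y+\frac{1}{\tau}\int_0^t e^{-(t-s)/\tau}\,v(x(s),s)(\omega)\,ds ,
\end{equation*}
one obtains for $t\in[T_1,T_2]$ a bound of the form $|\dot x(t)|\le e^{(T_2-T_1)/\tau}\bigl(|\overline y|+(T_2-T_1)\tau^{-1}M(\omega)\bigr)$ (equivalently, differentiate $|\dot x|^2$ along the flow and apply Gronwall's lemma). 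Thus $|\dot x|$ stays bounded on every finite interval, so the standard continuation criterion forces $I(\omega)=\mathbb{R}$ for every initial datum; uniqueness is inherited from the local statement. Since the exceptional null set is the same for all $(\overline x,\overline y)$, this establishes that ($M1$) has a unique global $C^{m-1}$--solution $\mathbb{P}$--a.s.

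The main obstacle is the global step: one has only a pathwise, time--interval--dependent bound on $v$ from Corollary \ref{Sec4-Cor1} (no time--uniform bound being available in the fractional setting), so one must check that this suffices to exclude blow--up and that the null set can be taken independent of the initial condition. Both follow from the special structure of ($M1$) — linear damping of $\dot x$ and compactness of the configuration space $\mathbb{T}^2$ — together with $\mathbb{E}(K^2)<\infty$; the remaining steps are the textbook ODE facts invoked above.
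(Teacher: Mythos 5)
Your proof is correct, and the local-existence half coincides with the paper's: both deduce from Corollary \ref{Sec4-Cor1} that $\psi(\omega)\in C(\mathbb{R},C^m(\mathbb{T}^2))$, hence $f_{\psi(\omega),\tau}\in C(\mathbb{R},C^{m-1}(\mathbb{T}^2\times\mathbb{R}^2,\mathbb{R}^4))$, and then invoke standard ODE theory (the paper cites Appendix B of \cite{Arnold98}) for a unique local $C^{m-1}$--solution off a null set that depends only on the chosen version of $\psi$. The global step is where you genuinely diverge. The paper verifies the abstract criterion $\vert f_{\psi(\omega),\tau}(t,x,y)\vert\le\alpha_\omega(t)\vert(x,y)\vert+\beta_\omega(t)$ with locally integrable coefficients, and to produce a globally defined $\beta_\omega$ it reaches forward to Lemma \ref{Sec6-Lem1}(ii), i.e.\ the sublinear-in-time bound $\vert\psi(t)(\omega)\vert_{C^1(\mathbb{T}^2)}\le\vert t\vert+K(\omega)$, which in turn rests on stationarity and the dichotomy of linear growth (Proposition 4.1.3 in \cite{Arnold98}). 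You instead fix a compact time interval, bound $\sup_{[T_1,T_2]}\sup_x\vert v(x,t)(\omega)\vert$ by the finite-interval random constant already supplied by Corollary \ref{Sec4-Cor1}, and use the Duhamel representation of $\dot x$ (equivalently Gronwall) plus compactness of $\mathbb{T}^2$ to rule out finite-time escape, concluding via the continuation criterion. Your route is more self-contained: it needs no forward reference to Section 6 and no ergodic-theoretic input, only the pathwise compact-interval bound. What the paper's route buys is the global-in-time sublinear control on $\vert\psi(t)\vert_{C^1}$, which is not really needed for global existence but is exactly the estimate reused in Section 6 to build the tempered absorbing set; stating it here amortizes that work. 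One small remark: the mention of $\mathbb{E}(K^2)<\infty$ in your closing paragraph is not actually used anywhere in the global step --- pathwise finiteness of $K(\omega)$ on each compact interval is all that is required --- so you could drop it without loss.
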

\begin{proof}
By Corollary \ref{Sec4-Cor1} there is a version of the strictly stationary solution $\psi$ to equation ($M3$) (again denoted by $\psi$) such that $\psi \in C(\mathbb{R},C^{m}(\mathbb{T}^2))$ $\mathbb{P}$-a.s.. This implies that $f_{\psi(\omega), \tau} \in C(\mathbb{R}, C^{m-1}(\mathbb{T}^2 \times \mathbb{R}^2, \mathbb{R}^4))$ where $f_{\psi(\omega), \tau}$ is defined in (\ref{Sec4-eq10}). Therefore, see e.g. Appendix B in \cite{Arnold98}, ($M1$) has a unique local $C^{m-1}$--solution $\mathbb{P}$-a.s.. To establish $\mathbb{P}$-a.s. global solutions we have to find locally integrable, positive functions
$ \alpha_\omega, \beta_\omega : \mathbb{R} \to [0, \infty)$ which may depend upon the realization $\omega \in \Omega$, such that $\mathbb{P}$-a.s.
$$ \vert f_{\psi(\omega), \tau}(t, x, y) \vert \leq \alpha_\omega(t) \vert (x,y) \vert + \beta_\omega(t). $$
By Lemma \ref{Sec6-Lem1}(ii) (see Section 6 below) there is a constant $K(\omega)>0$ such that $\mathbb{P}$-a.s.
\begin{equation*}
 \big\vert \frac{ \partial}{ \partial x_1} \psi(x,t)(\omega) \big\vert^2 + \big\vert \frac{ \partial}{ \partial x_2} \psi(x,t)(\omega) \big\vert^2 \leq \vert \psi(t) (\omega) \vert_{C^1(\mathbb{T}^2)}^2 \leq \left( \vert t \vert + K(\omega) \right)^2
\end{equation*}
for all $t \in \mathbb{R}$. Hence, we have for all $t \in \mathbb{R}$, $x=(x_1,x_2) \in \mathbb{T}^2$ and $y=(y_1,y_2) \in \mathbb{R}^2$
\begin{equation*}
\begin{split}
\vert f_{\psi(\omega), \tau}(t,x,y)\vert^2 &= y_1^2+ y_2^2 + \big\vert \frac{ \partial}{ \partial x_2} \psi(x,t)(\omega) - y_1 \big\vert^2 + \big\vert - \frac{ \partial}{ \partial x_1} \psi(x,t)(\omega) - y_2 \big\vert^2 \\
			&\leq 3 (y_1^2+ y_2^2)+ 2 \left( \big\vert \frac{ \partial}{ \partial x_1} \psi(x,t)(\omega) \big\vert^2 + \big\vert \frac{ \partial}{ \partial x_2} \psi(x,t)(\omega) \big\vert^2 \right) \\
			&\leq 3 \vert (x_1,x_2,y_1,y_2) \vert^2 + 2 \left( \vert t \vert + K(\omega) \right)^2
\end{split}
\end{equation*}
$\mathbb{P}$-a.s. and in particular
\begin{equation*}
\vert f_{\psi(\omega), \tau}(t,x,y) \vert \leq \sqrt{3} \vert (x, y) \vert + \sqrt{2} \left( \vert t \vert + K(\omega) \right).
\end{equation*}
\end{proof}


\section{Matching desired statistical properties of the velocity field}

We suppose that Assumption \ref{Main-Assumption} holds and set $A=\Delta$ (and thereby $\alpha_k= \vert k \vert^2$, $k \in K$). Further, assume that 
\begin{equation}
\sum_{k \in K} \lambda_k \vert k \vert^{2+4 \gamma-4H} < \infty 
\label{Sec5-eq1}
\end{equation}
for some $\gamma >0$. So by Corollary \ref{Sec4-Cor1} there is a unique ergodic mild solution $\psi$ of ($M3$) and there is a version of $\psi$ (again denoted by $\psi$) such that $\mathbb{P}$-a.s. $ \psi \in C \left(\mathbb{R}, C^1(\mathbb{T}^2) \right)$. \\
Since $v=\nabla^{\bot} \psi$, we have
\begin{equation}
v(x,t)= \binom{v_1(x,t)}{v_2(x,t)} = \sum_{k \in K} i \binom{k_2}{-k_1} \hat{\psi}_k(t) e_k(x) = \sum_{k \in K} \hat{v}_k(t) e_k(x), \ t \in \mathbb{R}, \ x \in \mathbb{T}^2,
\label{Sec5-eq2}
\end{equation}
where we set $\hat{v}_k(t) := i \binom{k_2}{-k_1} \hat{\psi}_k(t)$. Since $(\hat{\psi}_k(t))_{t \in \mathbb{R}}$, $k \in K$, are mean zero Gaussian processes, independent, except $(\hat{\psi}_k)^* = \hat{\psi}_{-k}$, $v$ is a \emph{mean zero Gaussian random field}. The \emph{autocovariance function} $ R : \mathbb{T}^2 \times \mathbb{T}^2  \times \mathbb{R} \times \mathbb{R} \rightarrow \mathbb{R}^{2 \times 2}$ of $v$ is given by
\begin{equation*}
\begin{split}
R(x,y,t,s) &= \left( \mathbb{E} \big( v_i(x,t) v_j(y,s)  \big) \right)_{1 \leq i,j \leq 2} \\
					 &=  \sum_{k \in K} \begin{pmatrix} 
	k_2^2 & - k_2 k_1 \\ - k_1 k_2 & k_1^2
									\end{pmatrix} \lambda_k \vert k \vert^{-4H}  \delta_k(t-s;H,\nu) \ e_k(x-y),
\end{split}
\end{equation*}
where we set
$$ \delta_k(t-s; H, \nu) := \frac{\Gamma(2H+1) \sin(\pi H)}{\pi} \int_{0}^{\infty} \cos( (t-s) \nu \vert k \vert^2 z) \frac{\vert z \vert}{1 + z^2} dz$$
and where we applied Proposition \ref{Section3-Prop1}(i) and Remark \ref{Sec4-Rem1}. Therefore, $v$ is \emph{stationary} and \emph{homogeneous}. For $k \in K$, the energy of the Fourier mode $k$ is defined by
\begin{equation*}
\begin{split}
\mathcal{E}(k)&= \frac{1}{2} \mathbb{E} \left( \hat{v}_k(t) (\hat{v}_k(t))^* \right) = \frac{1}{2} \mathbb{E} \left( \vert \hat{v}_k(t) \vert^2 \right)=  \frac{1}{2} \vert k \vert^2 \mathbb{E} \left( \vert \hat{\psi}_k(t) \vert^2 \right) = \frac{1}{2} \vert k \vert^2 \mathbb{E} \left( \vert \hat{\psi}_k(0) \vert^2 \right) \\
			&= \frac{\Gamma(2H)H}{2} \lambda_k \vert k \vert^{2-4H},
\end{split}
\end{equation*}
where we again applied Proposition \ref{Section3-Prop1}(i) and Remark \ref{Sec4-Rem1}. \\
To ensure \emph{isotropy}, we set $\lambda_k:= \zeta(\vert k \vert)$, $k \in K$, for a suitable positive function $\zeta: [0, \infty) \to [0, \infty)$, but keep in mind that (\ref{Sec5-eq1}) should still be satisfied. Then the energy of a Fourier mode $k \in K$ depends only on the length of $k$, in that, $\mathcal{E}(\kappa) :=\mathcal{E}(k)=\mathcal{E}(k')$ whenever $\kappa=\vert k \vert= \vert k' \vert $ for $k, k' \in K$. In such an isotropic random field of the form (\ref{Sec5-eq2}) it is customary to define the \emph{energy spectrum} in terms of total energy in all the Fourier modes of the same length $\kappa= \vert k \vert$ by $ E(\kappa) := \# \{ k \in K \big| \ \vert k \vert = \kappa \} \ \mathcal{E}(\kappa)$. Clearly, $E(\kappa)$ can be approximated by $E(\kappa) \approx C \kappa \mathcal{E}(\kappa)$ with some constant $C>0$. \\
In general the energy spectrum $E(\cdot)$ can be devided in three ranges: For small $\vert k \vert$, where the energy is injected, $E(\cdot)$ increases in $\vert k \vert $ algebraically. For large $\vert k \vert $, where the energy dissipates, we just set $\lambda_k$ and therefore the energy spectrum to zero (\emph{ultraviolet cut--off}). For intermediate $\vert k \vert $, in the so--called \emph{inertial subrange}, $E(\cdot)$ decays in $\vert k \vert $ algebraically. \\
The spectrum $(\lambda_k)_{k \in K}$ of $Q$ can be chosen so that the energy spectrum of $v$ matches experimentally observed energy spectra of a turbulent fluid flow, e.g. the \emph{Kolmogorov spectrum}: $E( \vert k \vert) \varpropto \vert k \vert^{- \frac{5}{3}}$ and therefore $\mathcal{E}( \vert k \vert)\varpropto \lambda_k \vert k \vert^{2-4H} \varpropto \vert k \vert^{- \frac{8}{3}}$, i.e. $ \lambda_k \varpropto \vert k \vert^{- \frac{14}{3}+ 4H}$. \\
Obviously, the decay of the spectrum $(\lambda_k)_{k \in K}$ of $Q$ as $\vert k \vert \to \infty$ determines the regularity of the velocity field $v$ and by this also the regularity of the transport equation ($M1$). From the physical point of view by applying Corollary \ref{Sec4-Cor2} it is clear that for any spectrum in the inertial subrange we have a unique global solution to ($M1$) due to the ultraviolet cut--off. From the mathematical point of view it is interesting to ask whether there is a (unique) solution to ($M1$) if we match the spectrum $(\lambda_k)_{k \in K}$ for all modes without the cut--off. In view of the Kolmogorov spectrum and (\ref{Sec4-eq5}) in Corollary \ref{Sec4-Cor1} we have
$$ \vert k \vert^{- \frac{14}{3}+4H-4H+2m+4 \gamma}= \vert k \vert^{-2} \vert k \vert^{- \frac{8}{3}+2m+4 \gamma} $$
and $ - \frac{8}{3}+2m+4 \gamma < 0 $ is satisfied for $m=1$ and $\gamma < \frac{1}{6}$. So by Corollary \ref{Sec4-Cor1} there is a version of $\psi$ (again denoted by $\psi$) such that $\mathbb{P}$-a.s. $ \psi \in C \left(\mathbb{R}, C^1(\mathbb{T}^2) \right)$ and  $v=\nabla^{\bot} \psi \in C \left(\mathbb{R}, C(\mathbb{T}^2, \mathbb{R}^2) \right)$. Hence, by the classic Peano existence theorem there is a solution to ($M1$), but uniqueness may fail. \\
Further, recall that our main motivation to use fractional noise was to match the statistical property
$$ \mathbb{E} \left( \vert v(x,t)- v(x,s) \vert^2 \right) \sim C \ \vert t-s \vert^{2H} $$
for $t,s \geq 0$, $x \in \mathbb{T}^2$ and some constant $C>0$. We have the following result.
\begin{prop}
Suppose Assumption \ref{Main-Assumption} holds and that there is $m \in \mathbb{N}$ and $\epsilon > 0$ such that
	\begin{equation}
		\sum_{k \in K} \lambda_k \vert k \vert^{2m + \epsilon} < \infty.
	\label{Sec5-eq3}
	\end{equation}
Then there is a unique ergodic mild solution $\psi$ to equation ($M3$) and there is a version of $\psi$ (again denoted by $\psi$) such that $\mathbb{P}$-a.s. $ \psi \in C \left(\mathbb{R}, C^m(\mathbb{T}^2) \right)$ and  $v=\nabla^{\bot} \psi \in C \left(\mathbb{R}, C^{m-1}(\mathbb{T}^2, \mathbb{R}^2) \right)$. Further, there is a constant $C(H, \nu)>0$ such that for any $t,s \in \mathbb{R}$ and $x \in \mathbb{T}^2$ we have
\begin{equation}
\mathbb{E} \left( \vert v(x,t)- v(x,s) \vert^2 \right) \leq C(H, \nu) \vert t-s \vert^{2H}
\label{Sec5-eq4}
\end{equation}
and for a fixed $T>0$ there is a constant $C(H, \nu, T)>0$ such that for any $t,s \in [-T,T]$ and $x \in \mathbb{T}^2$ we have
\begin{equation}
C(H, \nu, T) \vert t-s \vert^{2H} \leq \mathbb{E} \left( \vert v(x,t)- v(x,s) \vert^2 \right).
\label{Sec5-eq5}
\end{equation}
\label{Sec5-Prop1}
\end{prop}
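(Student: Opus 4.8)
The plan is to treat the regularity statement and the two moment bounds of the proposition separately, reducing everything to results already available. For the existence, uniqueness and regularity of $\psi$ and $v$, I would verify that hypothesis (\ref{Sec5-eq3}) implies hypothesis (\ref{Sec4-eq5}) of Corollary \ref{Sec4-Cor1} for the given $m$ and a suitable $\gamma\in(0,1)$: since here $A=\Delta$ we have $\alpha_k=|k|^2$, so (\ref{Sec4-eq5}) reads $\sum_{k\in K}\lambda_k|k|^{2m+4\gamma-4H}<\infty$ and $\sum_{k\in K}\lambda_k|k|^{2m+2\gamma-4H}<\infty$, and because every $k\in K$ satisfies $|k|\ge 2\pi>1$ both series are dominated by $\sum_{k\in K}\lambda_k|k|^{2m+\epsilon}$ as soon as $4\gamma\le 4H+\epsilon$, which holds e.g.\ with $\gamma:=\min\{\tfrac{1}{2},H\}\in(0,1)$. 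Corollary \ref{Sec4-Cor1} then yields the unique ergodic mild solution with a version satisfying $\psi\in C(\mathbb{R},C^m(\mathbb{T}^2))$ $\mathbb{P}$-a.s., and since $v=\nabla^{\bot}\psi$ costs exactly one spatial derivative, $v\in C(\mathbb{R},C^{m-1}(\mathbb{T}^2,\mathbb{R}^2))$ $\mathbb{P}$-a.s.

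For the moment estimates I would first establish, exactly as in the computation of the autocovariance $R$ and of $\mathcal{E}(k)$ in Section 5 (using that $v$ is real-valued and that, by Remark \ref{Sec4-Rem1}, $\mathbb{E}(\hat\psi_k\hat\psi_{k'})=0$ unless $k'=-k$, in which case it equals $\mathbb{E}(|\hat\psi_k|^2)$, together with $e_k(x)e_{-k}(x)=1$), the identity
\begin{equation*}
\mathbb{E}\big(|v(x,t)-v(x,s)|^2\big)=\sum_{k\in K}|k|^2\,\mathbb{E}\big(|\hat\psi_k(t)-\hat\psi_k(s)|^2\big),
\end{equation*}
which is independent of $x$ by homogeneity. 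By Remark \ref{Sec4-Rem1} each summand equals $2\,\mathbb{E}(|\hat\psi_{k,Re}(t)-\hat\psi_{k,Re}(s)|^2)$, and $\hat\psi_{k,Re}$ is the stationary solution of the fractional Langevin equation (\ref{Sec3-eq1}) with $\alpha=|k|^2$, $\lambda=\lambda_k/2$ and the given $\nu$, so Proposition \ref{Section3-Prop1}(ii),(iii) applies term by term; the convergence of the series is confirmed a posteriori by the upper bound below.

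Finally, for the upper bound (\ref{Sec5-eq4}) I would apply Proposition \ref{Section3-Prop1}(iii) with $\gamma=H$, giving $\mathbb{E}(|\hat\psi_k(t)-\hat\psi_k(s)|^2)\le C(H,\nu)\,\lambda_k|t-s|^{2H}$ with a constant independent of $k$; summing against $|k|^2$ and using $\sum_{k\in K}\lambda_k|k|^2<\infty$ (which follows from (\ref{Sec5-eq3}) since $m\ge 1$ and $|k|\ge 1$) yields (\ref{Sec5-eq4}). For the lower bound (\ref{Sec5-eq5}) I would fix a single mode $k_0\in K$ with $\lambda_{k_0}>0$ (such a mode exists unless $v\equiv 0$, in which case there is nothing to prove) and discard all other, nonnegative, terms: Proposition \ref{Section3-Prop1}(ii) applied to $\hat\psi_{k_0,Re}$ on $[-T,T]$ gives $\mathbb{E}(|v(x,t)-v(x,s)|^2)\ge 2|k_0|^2\,C_1(H,\lambda_{k_0}/2,\nu,|k_0|^2,T)\,|t-s|^{2H}$ for $t,s\in[-T,T]$. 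The one genuinely delicate point is this lower bound: the constant $C_1(H,\lambda,\nu,\alpha,T)$ of Proposition \ref{Section3-Prop1}(ii) degenerates as $\alpha=|k|^2\to\infty$, so the per-mode lower estimates cannot be summed — restricting to a single low mode is exactly what makes the argument go through.
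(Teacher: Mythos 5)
Your proposal is correct and follows essentially the same route as the paper: Corollary \ref{Sec4-Cor1} for existence, uniqueness and $C(\mathbb{R},C^m(\mathbb{T}^2))$--regularity, the mode--wise identity $\mathbb{E}(\vert v(x,t)-v(x,s)\vert^2)=\sum_{k\in K}\vert k\vert^2\,\mathbb{E}(\vert\hat\psi_k(t)-\hat\psi_k(s)\vert^2)$, and then Proposition \ref{Section3-Prop1}(ii),(iii) together with Remark \ref{Sec4-Rem1}. You in fact supply two details the paper leaves implicit — the verification that (\ref{Sec5-eq3}) implies (\ref{Sec4-eq5}) for a suitable $\gamma$, and the observation that the lower bound must be extracted from a single low mode because the per--mode constant $C_1$ degenerates as $\alpha_k\to\infty$ — both of which are correct readings of the intended argument.
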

\begin{proof}
The first assertion follows by Corollary \ref{Sec4-Cor1} in view of (\ref{Sec5-eq3}). Further, by (\ref{Sec5-eq2}) we have 
\begin{equation*}
\begin{split}
\mathbb{E} \left( \vert v(x,t)- v(x,s) \vert^2 \right) &= \sum_{k \in K} \vert k \vert^2 \mathbb{E} \left( \vert \hat{\psi}_k(t)- \hat{\psi}_k(s) \vert^2 \right) \vert e_k(x) \vert^2 \\ 
	&= \sum_{k \in K} \vert k \vert^2 \mathbb{E} \left( \vert \hat{\psi}_k(t)- \hat{\psi}_k(s) \vert^2 \right) 
\end{split}
\end{equation*}
for any $t,s \in \mathbb{R}$ and $x \in \mathbb{T}^2$. (\ref{Sec5-eq4}) and (\ref{Sec5-eq5}) follow now by (\ref{Sec5-eq3}), Proposition \ref{Section3-Prop1}(ii),(iii) and Remark \ref{Sec4-Rem1}.
\end{proof}
\begin{rem}
It should be noted that (\ref{Sec5-eq3}) in Proposition \ref{Sec5-Prop1} is very restrictive and not satisfied for $H \in [\frac{1}{6},1)$ if $m=1$ and if we use the Kolmogorov spectrum. But in view of the ultraviolet cut--off in the region where the energy dissipates, (\ref{Sec5-eq3}) is fulfilled for any energy spectrum in the inertial subrange.
\end{rem}


\section{The model as random dynamical system and existence of the random pullback attractor}

First we recall some required definitions from the theory of random dynamical systems. For the general theory of random dynamical systems we refer to the excellent monograph \cite{Arnold98}. \\
In the following $(X,d)$ is a complete separable metric space and $2^{X}$ denotes the set of all subsets of $X$. Further, for $B \in 2^{X}$ we denote by $\overline{B}$ the closure of $B$ in $X$ and by $B^c:= B \setminus X $ the complement of $B$ in $X$. For $x \in X$ and $B,C \in 2^X$ we define the \emph{semidistance} by
$$ dist(x,B):= \inf_{b \in B} d(x,b) \quad \text{and} \quad dist(B,C):= \sup_{b \in B} \inf_{c \in C} d(b,c). $$
We make the convention $d(x, \emptyset)= \infty$, where $\emptyset$ denotes the empty set. \\
\noindent
A family $(\theta(t))_{t \in \mathbb{R}})$ of mappings on $\Omega$ into itself is called a \emph{metric dynamical system} and is defined by $ \left(\Omega,\mathcal{F}, \mathbb{P}, (\theta(t))_{t \in \mathbb{R}} \right)$ if it satisfies the following four conditions:
\begin{enumerate}
	\item[(i)] The mapping $(\omega, t) \mapsto \theta(t)\omega$ is $\mathcal{F} \otimes \mathcal{B}(\mathbb{R}) $--$\mathcal{F}$ measurable.
	\item[(ii)] $\theta(0)= id_{\Omega} = \text{ identity map in } \Omega$.
	\item[(iii)] $(\theta(t))_{t \in \mathbb{R}}$ satisfies the \emph{flow property}, i.e. $ \theta(t+s)= \theta(t) \circ \theta(s)$ for all $s,t \in \mathbb{R}$, where $\circ$ denotes the composition.
	\item[(iv)] $(\theta(t))_{t \in \mathbb{R}}$ is a family of measure preserving transformations, i.e. $ \mathbb{P} (\theta(t)^{-1}(A))= \mathbb{P}(A)$ for all $A \in \mathcal{F}$ and $t \in \mathbb{R}$, where $\theta(t)^{-1}(A):= \{ \omega \in \Omega \ | \ \theta(t)\omega \in A \}.$
\end{enumerate}
We say that a metric dynamical system is \emph{ergodic} if for all $A \in \mathcal{F}$ such that $\theta(t)^{-1}(A)=A$ for all $t \in \mathbb{R}$, we have $\mathbb{P}(A) \in \{0,1 \}$. \\
A \emph{random dynamical system} (RDS) on $X$ over a metric dynamical system \\
$(\Omega,\mathcal{F}, \mathbb{P}, (\theta(t))_{t \in \mathbb{R}})$ with time $\mathbb{R}$ is a mapping
$$ \phi: \mathbb{R} \times \Omega \times X \to X, \ (t, \omega, x) \mapsto \phi(t, \omega,x),$$
with the following properties:
\begin{enumerate}
	\item[(i)] \emph{Measurability}: $\phi$ is $\mathcal{B}(\mathbb{R})\otimes \mathcal{F} \otimes \mathcal{B}(X)$--$\mathcal{B}(X)$ measurable.
	\item[(ii)] \emph{Cocycle property}: The mappings $ \phi(t,\omega):= \phi(t, \omega, \cdot): X \to X $
	form a \emph{cocycle over} $\theta$, i.e. they satisfy $\phi(0, \omega) = id_X$  for all  $\omega \in \Omega $ and	$\phi(t+s, \omega) = \phi(t, \theta(s) \omega) \circ \phi(s, \omega)$  for all $s,t \in \mathbb{R}, \omega \in \Omega$. Here $\circ$ denotes the composition.  
\end{enumerate}
We call a RDS $\phi$ \emph{continuous} or $C^0$--RDS if $ (t,x) \mapsto \phi(t, \omega, x)$ is continuous for every $\omega \in \Omega$ and we say that a RDS $\phi$ is a $C^k$--RDS, where $1 \leq k \leq \infty$ if for each $(t, \omega) \in \mathbb{R} \times \Omega$ the mapping $ x \mapsto \phi(t, \omega, x)$ is $k$--times differentiable w.r.t. $x \in X$ and the derivatives are continuous w.r.t. $(t, x) \in \mathbb{R} \times X$ for each $\omega \in \Omega$. \\
Because of the non--autonomous noise dependence of a RDS, generalized concepts of absorption, attraction and invariance of (random) sets have to be introduced. For that we also recall some facts from the theory of \emph{measurable (closed) random sets}, sometimes also called \emph{measurable multifunctions}, see \cite{Crauel02}. \\
A set valued map $D: \Omega \to 2^{X}$ taking values in closed subsets of $X$ is said to be \emph{measurable} if for each $x \in X $ the map $ \omega \mapsto dist(x, D(\omega))$ is $\mathcal{F}$--$\mathcal{B}([0,\infty))$ measurable. In this case $D$ is called a \emph{closed random set (of X)}. \\
A \emph{universe of closed random sets $\mathcal{D}$ (of X)} is a system of non--empty closed random sets of $X$, such that $\mathcal{D}$ is closed under inclusion, i.e. if $D$ and $ D'$ are non--empty closed random sets of X, such that $D'(\omega) \subseteq D(\omega)$, for all $ \omega \in \Omega,$ and $D \in \mathcal{D}$, then $D' \in \mathcal{D}$. \\
Now let $\phi$ be a RDS and $\mathcal{D}$ a universe of closed random sets. \\
A closed random set $B$ is called \emph{(strictly) $\phi$--forward invariant} if
$$\phi(t, \omega)B(\omega) \subseteq B(\theta(t) \omega) \quad (\phi(t, \omega)B(\omega) = B(\theta(t) \omega))$$
for all $\omega \in \Omega$, $t \geq 0$. \\
A closed random set $B \in \mathcal{D}$ is called \emph{$\mathcal{D}$--absorbing} if for any $D \in \mathcal{D}$, $\omega \in \Omega$ there exists a time $t_D(\omega) \geq 0$, the so--called \emph{absorption time}, such that for any $t > t_D(\omega)$
$$ \phi(t, \theta(-t) \omega)D(\theta(-t) \omega) \subseteq B(\omega). $$
A closed random set $A \in \mathcal{D}$ with compact values is called \emph{random} $\mathcal{D}$--\emph{attractor} of the RDS $\phi$ if $A$ is strictly $\phi$--forward invariant and for any $\omega \in \Omega$ we have
\begin{equation*}
dist \left(\overline{\phi(t, \theta(-t) \omega) D(\theta(-t) \omega)}, A(\omega) \right) \to 0 \text{ as } t \to \infty
\end{equation*}
for any $D \in \mathcal{D}$. \\
\noindent
If there exists a random $\mathcal{D}$--attractor, then the attractor is already unique. Indeed, suppose we have two attractors $A_i \in \mathcal{D}$, $i=1,2$. It follows for any $\omega \in \Omega$ that
$$ dist(A_1(\omega),A_2(\omega))= \lim_{t \to \infty} dist \left(\overline{\phi(t, \theta(-t) \omega) A_1(\theta(-t) \omega)}, A_2(\omega) \right)=0. $$
Therefore, $A_1(\omega) \subseteq A_2(\omega)$ for any $\omega \in \Omega$. Similarly, we can find the contrary inclusion. So the $\mathcal{D}$--attractor is unique.
\begin{thm}
Let $\phi$ be a continuous RDS and $\mathcal{D}$ a universe of closed random sets. In addition, we assume the existence of a $\phi$--forward invariant and $\mathcal{D}$--absorbing closed random set $B$ with compact values. Then the RDS has a unique random $\mathcal{D}$--attractor given by 
$$ A(\omega)= \bigcap_{t \in \mathbb{N}} \phi(t, \theta(-t) \omega)B(\theta(-t)\omega). $$
\label{Sec6-Thm1}
\end{thm}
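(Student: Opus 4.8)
The plan is to verify that the set
$$A(\omega)=\bigcap_{t\in\mathbb{N}}\phi(t,\theta(-t)\omega)B(\theta(-t)\omega)$$
is a random $\mathcal{D}$--attractor. Uniqueness is already contained in the remark preceding the theorem, so only existence needs to be checked: that $A$ is a closed random set in $\mathcal{D}$ with compact values which is strictly $\phi$--forward invariant and $\mathcal{D}$--attracting. Throughout I write $K_t(\omega):=\phi(t,\theta(-t)\omega)B(\theta(-t)\omega)$ for $t\geq 0$.

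First I would establish monotonicity: $K_{t+s}(\omega)\subseteq K_t(\omega)$ for all $s,t\geq 0$. This follows from the cocycle identity $\phi(t+s,\theta(-(t+s))\omega)=\phi(t,\theta(-t)\omega)\circ\phi(s,\theta(-(t+s))\omega)$ together with the $\phi$--forward invariance of $B$, which gives $\phi(s,\theta(-(t+s))\omega)B(\theta(-(t+s))\omega)\subseteq B(\theta(-t)\omega)$. Since $B$ has nonempty compact values and each $\phi(t,\omega)$ is continuous, every $K_t(\omega)$ is nonempty and compact, so the nested family $\{K_t(\omega)\}_{t\geq 0}$ has nonempty compact intersection, and by monotonicity $A(\omega)=\bigcap_{t\in\mathbb{N}}K_t(\omega)=\bigcap_{t\geq 0}K_t(\omega)\subseteq K_0(\omega)=B(\omega)$. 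Thus $A$ has nonempty compact values and $A\subseteq B$; measurability of $A$ follows because each $K_t$ is a measurable closed random set (the image of the measurable multifunction $B$ under the map $\phi(t,\cdot\,,\cdot)$, which is measurable and continuous in $x$; use e.g. a Castaing representation of $B$) and a countable intersection of measurable closed random sets is again measurable, see \cite{Crauel02}. Since $\mathcal{D}$ is closed under inclusion and $B\in\mathcal{D}$, this yields $A\in\mathcal{D}$.

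Next I would prove strict forward invariance $\phi(t,\omega)A(\omega)=A(\theta(t)\omega)$. For "$\subseteq$", take $x\in A(\omega)$ and fix $s\geq 0$. If $s\geq t$, then $x\in K_{s-t}(\omega)$, and splitting the cocycle as $\phi(s,\theta(t-s)\omega)=\phi(t,\omega)\circ\phi(s-t,\theta(t-s)\omega)$ gives $\phi(t,\omega)x\in\phi(s,\theta(t-s)\omega)B(\theta(t-s)\omega)=K_s(\theta(t)\omega)$. If $s<t$, write $\phi(t,\omega)=\phi(s,\theta(t-s)\omega)\circ\phi(t-s,\omega)$ and use $x\in B(\omega)$ with forward invariance of $B$ to get $\phi(t-s,\omega)x\in B(\theta(t-s)\omega)$, whence again $\phi(t,\omega)x\in K_s(\theta(t)\omega)$. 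As this holds for every $s$, $\phi(t,\omega)x\in A(\theta(t)\omega)$. For "$\supseteq$", let $y\in A(\theta(t)\omega)$; for $n\in\mathbb{N}$, $n>t$, write $y\in K_n(\theta(t)\omega)$ as $y=\phi(t,\omega)x_n$ with $x_n\in K_{n-t}(\omega)\subseteq B(\omega)$ via the same cocycle splitting. Compactness of $B(\omega)$ yields a subsequence $x_{n_j}\to x$, and since the $K_m(\omega)$ are closed and decreasing, $x\in\bigcap_m K_m(\omega)=A(\omega)$; continuity of $\phi(t,\omega)$ gives $y=\phi(t,\omega)x\in\phi(t,\omega)A(\omega)$.

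Finally I would prove $\mathcal{D}$--attraction, arguing by contradiction. Suppose for some $D\in\mathcal{D}$ and $\omega$ there are $\varepsilon>0$, $t_n\to\infty$ and $d_n\in D(\theta(-t_n)\omega)$ with $x_n:=\phi(t_n,\theta(-t_n)\omega)d_n$ satisfying $dist(x_n,A(\omega))\geq\varepsilon$. Fix $m\in\mathbb{N}$ and split $\phi(t_n,\theta(-t_n)\omega)=\phi(m,\theta(-m)\omega)\circ\phi(t_n-m,\theta(-t_n)\omega)$ for $t_n>m$; applying the $\mathcal{D}$--absorbing property of $B$ with base point $\theta(-m)\omega$ shows $\phi(t_n-m,\theta(-t_n)\omega)d_n\in B(\theta(-m)\omega)$ once $t_n>m+t_D(\theta(-m)\omega)$, hence $x_n\in K_m(\omega)$ for all large $n$. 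As in the previous step, a subsequence of $(x_n)$ converges to some $x\in\bigcap_m K_m(\omega)=A(\omega)$, and continuity of $y\mapsto dist(y,A(\omega))$ forces $dist(x,A(\omega))\geq\varepsilon$, contradicting $x\in A(\omega)$. Passing from $K_t$ to its closure in the definition of the attractor changes nothing since $dist(\overline{S},A(\omega))=dist(S,A(\omega))$ by continuity of $dist(\cdot\,,A(\omega))$. I expect the main difficulty to be purely organisational: keeping the $\theta$--shifts and cocycle splittings consistent in both the invariance and the attraction arguments; the only input that is not entirely elementary is the measurability of $A$, which is standard in the theory of measurable multifunctions.
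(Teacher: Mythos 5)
Your argument is correct and complete, but note that the paper does not actually prove this theorem: its ``proof'' consists solely of the citation to Proposition 9.3.2 in Arnold's monograph and Theorem 2.4 of Schmalfuss, so there is nothing in the paper to compare against step by step. What you have written is, in substance, the standard argument from those references: monotonicity of the sets $K_t(\omega)=\phi(t,\theta(-t)\omega)B(\theta(-t)\omega)$ via the cocycle identity combined with forward invariance of $B$, nonemptiness and compactness of the nested intersection, strict invariance through the two cocycle splittings (with a compactness/subsequence extraction for the reverse inclusion), and attraction by contradiction using absorption into $B(\theta(-m)\omega)$ at an intermediate time; all of the $\theta$--shift bookkeeping checks out. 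Two small points you may wish to make explicit. First, when negating the attraction property you should record the passage from $dist\bigl(\overline{\phi(t_n,\theta(-t_n)\omega)D(\theta(-t_n)\omega)},A(\omega)\bigr)\geq\varepsilon$, which is a supremum over the image set, to the existence of individual points $x_n=\phi(t_n,\theta(-t_n)\omega)d_n$ with $dist(x_n,A(\omega))\geq\varepsilon/2$. Second, the measurability of the countable intersection does not really need the general multifunction machinery here: for a decreasing sequence of nonempty compact sets one has $dist\bigl(x,\bigcap_{n}K_n(\omega)\bigr)=\sup_{n}dist(x,K_n(\omega))$, so measurability of $A$ follows directly from measurability of each $K_n$, which in turn follows from a Castaing representation of $B$ exactly as you indicate.
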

\begin{proof}
See Proposition 9.3.2 in \cite{Arnold98} or Theorem 2.4 in \cite{Schmalfuss97}.
\end{proof}
\begin{rem}
The existence of a random attractor for a RDS $\phi$ implies the existence of an \emph{invariant measure} for $\phi$, see \cite{Crauel99} for the definition and more details. In particular, all invariant measures are supported on the random attractor.
\end{rem}
Now we come back to our model.
\begin{prop}
Suppose Assumption \ref{Main-Assumption} holds and that there is $m \in \mathbb{N}_0$ and $\gamma \in (0,1)$ such that
\begin{equation}
\sum_{k \in K} \lambda_k \alpha_k^{2 \gamma - 2H} \vert k \vert^{2m} < \infty \quad \text{and} \quad \sum_{k \in K} \lambda_k \alpha_k^{- 2H} \vert k \vert^{2m+2 \gamma} < \infty. 
\label{Sec6-eq1}
\end{equation}
Then the quadruple $\left(\Omega, \mathcal{F}, \mathbb{P}, (\theta(t))_{t \in \mathbb{R}} \right)$ defines an ergodic metric dynamical system, where
\begin{itemize}
	\item $\Omega = C \left(\mathbb{R}, C^m(\mathbb{T}^2) \right)$ equipped with the compact open topology given by the complete metric $d(\psi, \widetilde{\psi}):= \sum_{n=1}^{\infty} \vert \psi - \widetilde{\psi} \vert_n / \big(2^n (1+ \vert \psi - \widetilde{\psi} \vert_n) \big)$, where $\vert \psi - \widetilde{\psi} \vert_n := \sup_{-n \leq t \leq n} \vert \psi(t) - \widetilde{\psi}(t) \vert_{C^m(\mathbb{T}^2)}$,
	\item $\mathcal{F}$ is the associated Borel $\sigma$--algebra, which is the trace in $\Omega$ of the product $\sigma$--algebra $(\mathcal{B}(C^m(\mathbb{T}^2)))^{\otimes \mathbb{R}}$,
	\item $\mathbb{P}$ is the distribution of the ergodic mild solution of ($M3$),
	\item $(\theta(t))_{t \in \mathbb{R}}$ is the group of shifts, i.e. $ \theta(t) \psi(s)= \psi(t+s)$ for all $t,s \in \mathbb{R}$ and $\psi \in \Omega$.
\end{itemize}
\label{Sec6-Prop1}
\end{prop}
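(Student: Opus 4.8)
The plan is to verify the four defining properties of a metric dynamical system for the shift group $(\theta(t))_{t\in\mathbb{R}}$ and then to deduce ergodicity from a mixing property of the stationary field. I begin with the canonical-space set-up. By Corollary~\ref{Sec4-Cor1} the ergodic mild solution has a version, still denoted $\psi$, whose sample paths lie in $\Omega=C(\mathbb{R},C^m(\mathbb{T}^2))$ $\mathbb{P}$-a.s.; since $C^m(\mathbb{T}^2)$ is separable, $\Omega$ is Polish and its Borel $\sigma$-algebra coincides with the trace of the cylindrical $\sigma$-algebra $(\mathcal{B}(C^m(\mathbb{T}^2)))^{\otimes\mathbb{R}}$, so the stated $\mathcal{F}$ and the law $\mathbb{P}$ of this version are well defined. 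From now on $\psi$ denotes the coordinate process on $\Omega$ and $\mathbb{P}$ its law. The identities $\theta(0)=\mathrm{id}_\Omega$ and $\theta(t+s)=\theta(t)\circ\theta(s)$ are immediate from the definition of the shift, so (ii) and (iii) hold; for the joint measurability (i) it is enough to show that $(t,\psi)\mapsto\theta(t)\psi$ is continuous on $\mathbb{R}\times\Omega$, which follows because each $\psi\in\Omega$ is uniformly continuous on compact time intervals: if $t_n\to t$ and $\psi_n\to\psi$ in $\Omega$, then $\psi_n(\cdot+t_n)\to\psi(\cdot+t)$ uniformly on every compact interval, hence $\theta(t_n)\psi_n\to\theta(t)\psi$ in the metric $d$.

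For the measure-preservation property (iv) I would use that $\psi$ is strictly stationary on all of $\mathbb{R}$. Writing $\psi=\sum_{k\in K}\hat\psi_k e_k$ as in Remark~\ref{Sec4-Rem1}, each $\hat\psi_k$ is a stationary fractional Ornstein--Uhlenbeck process driven by a two-sided fBm with stationary increments, and, modulo the identifications $\hat\psi_{-k}=(\hat\psi_k)^*$, these processes are independent; hence the $C^m(\mathbb{T}^2)$-valued process $(\psi(t))_{t\in\mathbb{R}}$ is strictly stationary under arbitrary shifts, i.e. $(\psi(t_1+r),\dots,\psi(t_n+r))\overset{d}{=}(\psi(t_1),\dots,\psi(t_n))$ for all $t_1,\dots,t_n,r\in\mathbb{R}$. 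Since on the Polish space $\Omega$ a probability measure is determined by its finite-dimensional marginals, a monotone-class argument yields $\theta(r)_*\mathbb{P}=\mathbb{P}$, i.e. $\mathbb{P}(\theta(r)^{-1}A)=\mathbb{P}(A)$ for all $A\in\mathcal{F}$ and $r\in\mathbb{R}$.

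For ergodicity I would prove the stronger statement that $(\theta(t))_{t\in\mathbb{R}}$ is mixing, from which ergodicity follows at once. It suffices to show $\mathbb{E}\bigl[(F\circ\theta(t))\,G\bigr]\to\mathbb{E}[F]\,\mathbb{E}[G]$ as $t\to\infty$ for all bounded continuous functionals $F,G$ on $\Omega$ depending on $\psi$ through only finitely many time points, since such functionals generate $\mathcal{F}$ and, thanks to the invariance of $\mathbb{P}$ just established, the general case follows by an $L^1$-approximation argument. For such $F,G$ the claim reduces to asymptotic independence, as $t\to\infty$, of the jointly Gaussian vectors $(\psi(t+s_1),\dots,\psi(t+s_p))$ and $(\psi(r_1),\dots,\psi(r_q))$; by Gaussianity this in turn is implied by the decay to $0$ of all the corresponding cross-covariances. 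Expanding in the basis $(e_k)_{k\in K}$ and invoking Remark~\ref{Sec4-Rem1} together with the covariance formula in Proposition~\ref{Section3-Prop1}(i), each cross-covariance is a series whose $k$-th term is proportional to $\int_0^\infty\cos\bigl((t+s_i-r_j)\nu\alpha_k x\bigr)\,\tfrac{x^{1-2H}}{1+x^2}\,dx$; since $x\mapsto x^{1-2H}/(1+x^2)$ belongs to $L^1(0,\infty)$ for every $H\in(0,1)$, the Riemann--Lebesgue lemma sends each such integral to $0$ as $t\to\infty$, and the summability hypothesis (\ref{Sec6-eq1}) supplies a majorant that legitimizes exchanging the limit with the summation over $k$. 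Alternatively, once the invariance of $\mathbb{P}$ is in hand, ergodicity of $(\theta(t))$ may simply be quoted from the ergodicity of the stationary solution in Theorem~\ref{Sec4-thm1}, via Theorem~4.6 in \cite{Maslowski08}.

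The step I expect to require the most care is the ergodicity argument: one has to check that the finite-time cylinder functionals really do generate $\mathcal{F}$ (using separability of $C^m(\mathbb{T}^2)$), that the passage from mixing on this generating family to mixing on all of $\mathcal{F}$ is compatible with measure-preservation, and that the dominated-convergence interchange in the infinite series over $k\in K$ is valid under (\ref{Sec6-eq1}). The verification of the metric-dynamical-system axioms (i)--(iv) is, by contrast, routine.
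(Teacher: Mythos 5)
Your proposal is correct, and its skeleton (canonical realization on $C(\mathbb{R},C^m(\mathbb{T}^2))$ via Corollary~\ref{Sec4-Cor1}, flow property by inspection, measure preservation from strict stationarity of the solution plus the fact that a Borel probability measure on this Polish path space is determined by its finite--dimensional marginals) coincides with the paper's. You diverge from the paper in two places, both in the direction of doing more work rather than less. For the measurability axiom the paper only checks separate continuity of $(t,\psi)\mapsto\theta(t)\psi$ and invokes Lemma~2.1 of \cite{Crauel02}, whereas you prove joint continuity directly from uniform continuity on compacts; your version is slightly stronger and self--contained, the paper's is shorter. The more substantive difference is ergodicity: the paper simply inherits it from the ergodicity of the stationary solution established in Theorem~\ref{Sec4-thm1} via Theorem~4.6 of \cite{Maslowski08} (your ``alternative'' route), while your primary argument proves the stronger mixing property from the spectral representation of the covariance in Proposition~\ref{Section3-Prop1}(i) together with Remark~\ref{Sec4-Rem1}, the Riemann--Lebesgue lemma applied to $x\mapsto x^{1-2H}/(1+x^2)\in L^1(0,\infty)$, and domination of the series over $k$ by $\sum_k\lambda_k\alpha_k^{-2H}\vert k\vert^{2m}$, which is finite under (\ref{Sec6-eq1}) since $\alpha_k\geq c>0$. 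This buys a self--contained and quantitatively stronger conclusion (mixing, not just ergodicity) at the cost of one genuinely delicate step you correctly flag: passing from vanishing cross--covariances to asymptotic independence for $C^m(\mathbb{T}^2)$--valued (infinite--dimensional) jointly Gaussian vectors, which needs tightness of the joint laws (automatic, since the marginals are fixed by stationarity) plus the fact that a weak limit of jointly Gaussian vectors is jointly Gaussian with the limiting covariance, so that zero cross--covariance forces the product measure. With that step spelled out, your argument is complete; the paper's route avoids it entirely by citation.
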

\begin{proof}
By Corollary \ref{Sec4-Cor1} and assumptions (\ref{Sec6-eq1}) the ergodic mild solution $\psi$ to equation ($M3$) is realized on $\Omega:= C \left(\mathbb{R}, C^m(\mathbb{T}^2) \right)$. Endowing $\Omega$ with the compact open topology makes $\Omega$ a Polish space, actually a Frechet space. \\
The group of shifts $(\theta(t))_{t \in \mathbb{R}}$ on $\Omega$ defined by $ \theta(t)\psi(\cdot)= \psi(t+ \cdot) $
for $t \in \mathbb{R}$ satisfies the flow property and is measure preserving, since $\mathbb{P}$ is the distribution of the ergodic mild solution of the equation ($M3$). \\
Obviously, $t \mapsto \theta(t)\psi$ is continuous for all $\psi \in \Omega$ and $ \psi \mapsto \theta(t) \psi$ is continuous for all $t \in \mathbb{R}$. Therefore, by Lemma 2.1 in \cite{Crauel02} $(t, \psi) \mapsto \theta(t) \psi$ is $ \mathcal{B}( \mathbb{R}) \otimes \mathcal{F}$--$\mathcal{F}$ measurable. \\ 
Hence, $(\Omega, \mathcal{F}, \mathbb{P}, (\theta(t))_{t \in \mathbb{R}})$ defines an ergodic metric dynamical system.
\end{proof}
The next lemma will be used several times in this section.
\begin{lem}
Suppose all assumptions of Proposition \ref{Sec6-Prop1} hold. In particular assume that there is $m \in \mathbb{N}_0$ and $\gamma \in (0,1)$ such that (\ref{Sec6-eq1}) is satisfied and let $ \left(\Omega, \mathcal{F}, \mathbb{P}, (\theta(t))_{t \in \mathbb{R}} \right)$ be the ergodic metric dynamical system introduced in Proposition \ref{Sec6-Prop1}. Then the following assertions are valid.
\begin{enumerate}
	\item[(i)] For all $- \infty < T_1 < t < T_2 < \infty $ the mappings $ \psi \mapsto \vert \psi(t) \vert_{C^m(\mathbb{T}^2)} $ and $ \psi \mapsto \vert \psi \vert_{C([T_1,T_2], C^m(\mathbb{T}^2))}$ are $\mathcal{F}$--$\mathcal{B}(\mathbb{R})$ measurable and there is a constant \\
$C=C(m, H, \nu, \gamma, T_1, T_2) >0$ such that
\begin{equation}
\mathbb{E} \left( \vert id_\Omega(t) \vert_{C^m(\mathbb{T}^2)}^2 \right) \leq \mathbb{E} \left( \vert id_\Omega \vert_{C([T_1,T_2], C^m(\mathbb{T}^2))}^2 \right) \leq C,
\label{Sec6-eq2}
\end{equation}	
where $id_\Omega : \Omega \rightarrow \Omega$, $ \psi \mapsto \psi$.
		\item[(ii)] There is a $( \theta(t))_{t \in \mathbb{R}}$--invariant set $\mathcal{F} \ni \Omega_0 = \Omega_0(m, \gamma, H, \nu) \subset \Omega$ (i.e. $\theta(t)^{-1} \Omega_0 = \Omega_0 $ for all $t \in \mathbb{R}$) with $\mathbb{P} (\Omega_0) =1$ such that for all $\delta > 0$, $\psi \in \Omega_0$ there is a constant $C(\psi)=C(\psi, m, \gamma, H, \nu, \delta) >0$ such that 
\begin{equation}
\vert \psi(t) \vert_{C^m(\mathbb{T}^2)} \leq \delta \vert t \vert + C(\psi)
\label{Sec6-eq3}
\end{equation}
for all $t \in \mathbb{R}$. In particular, the mapping
$$ \psi \mapsto \begin{cases}
															 \int_{- \infty}^0 e^{\frac{s}{\tau}} \vert \psi(s) \vert_{C^m(\mathbb{T}^2)}^2 ds  & \text{ for } \psi \in \Omega_0 \\
																								0 & \text{ for } \psi \notin \Omega_0
								\end{cases} $$
with $\tau > 0$ is well--defined and $\mathcal{F}$--$\mathcal{B}(\mathbb{R})$ measurable. \\
\end{enumerate}
\label{Sec6-Lem1}
\end{lem}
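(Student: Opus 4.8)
The plan is to prove the three sub-claims of the lemma in order, relying throughout on Corollary \ref{Sec4-Cor1} and Theorem \ref{Sec4-thm2}. First I would handle the measurability and $L^2$-bound in part (i). Since $\Omega = C(\mathbb{R}, C^m(\mathbb{T}^2))$ carries the compact-open topology, for fixed $t$ the evaluation map $\psi \mapsto \psi(t) \in C^m(\mathbb{T}^2)$ is continuous, hence Borel, and composing with the continuous norm $|\cdot|_{C^m(\mathbb{T}^2)}$ gives $\mathcal{F}$--$\mathcal{B}(\mathbb{R})$ measurability of $\psi \mapsto |\psi(t)|_{C^m(\mathbb{T}^2)}$. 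For $\psi \mapsto |\psi|_{C([T_1,T_2],C^m(\mathbb{T}^2))} = \sup_{t \in [T_1,T_2]} |\psi(t)|_{C^m(\mathbb{T}^2)}$, I would write the sup over $[T_1,T_2]$ as a sup over a countable dense subset (legitimate by continuity of $t \mapsto \psi(t)$ in $C^m(\mathbb{T}^2)$), so it is a countable supremum of measurable maps, hence measurable. The first inequality in (\ref{Sec6-eq2}) is trivial since $T_1 < t < T_2$. For the second, I would unwind $|id_\Omega|_{C([T_1,T_2],C^m(\mathbb{T}^2))} = |\psi|_{C([T_1,T_2],C^m(\mathbb{T}^2))}$ under $\mathbb{P}$, which by construction is the law of the ergodic mild solution, into $\sup_{t \in [T_1,T_2]} \sup_{x \in \mathbb{T}^2} \sum_{|\delta| \le m} |D^\delta \psi(x,t)|$, and then invoke exactly the estimate already established in the proof of Corollary \ref{Sec4-Cor1}: by (\ref{Sec4-eq7}) and (\ref{Sec4-eq8}) there is a random variable $K$ with $\mathbb{E}(K^2) < \infty$ and a fixed point $(x_0,t_0)$ with $\mathbb{E}(|D^\delta \psi(x_0,t_0)|^2) < \infty$ dominating this supremum, giving the constant $C = C(m,H,\nu,\gamma,T_1,T_2)$.

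For part (ii), the key is a Borel--Cantelli argument producing the sublinear growth bound (\ref{Sec6-eq3}). Fix $\delta > 0$. For $n \in \mathbb{Z}$ consider the event that $|\psi|_{C([n,n+1],C^m(\mathbb{T}^2))} > \delta |n|$; by part (i), $\mathbb{E}\big(|\psi|_{C([n,n+1],C^m(\mathbb{T}^2))}^2\big) \le C$ uniformly in $n$ (the constant from (\ref{Sec6-eq2}) is translation-invariant because $\mathbb{P}$ is stationary, i.e. $\theta(t)$-invariant), so by Chebyshev the probabilities are summable in $n$ and Borel--Cantelli gives a full-measure set on which $|\psi|_{C([n,n+1],C^m(\mathbb{T}^2))} \le \delta |n|$ for all but finitely many $n$; absorbing the finitely many exceptional blocks into an additive constant $C(\psi)$ yields (\ref{Sec6-eq3}) for all $t \in \mathbb{R}$. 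I would then intersect these full-measure sets over a countable sequence $\delta_j \downarrow 0$ and take the $(\theta(t))_{t \in \mathbb{R}}$-invariant core $\Omega_0 := \bigcap_{t \in \mathbb{R}} \theta(t)^{-1}(\cdots)$ — or, more cleanly, observe that the set where (\ref{Sec6-eq3}) holds for every $\delta > 0$ (with $\delta$ running over positive rationals) is already shift-invariant up to a change of the additive constant, then pass to its invariant kernel, which still has full measure by $\theta$-invariance of $\mathbb{P}$ and measurability of the defining conditions. Finally, for the well-definedness and measurability of the last mapping: on $\Omega_0$, the bound (\ref{Sec6-eq3}) with $\delta < \tfrac{1}{2\tau}$ (say) makes $s \mapsto e^{s/\tau}|\psi(s)|_{C^m(\mathbb{T}^2)}^2 \le e^{s/\tau}(\delta|s|+C(\psi))^2$ integrable on $(-\infty,0]$, so the integral is finite; measurability follows by writing it as a monotone limit of the measurable maps $\psi \mapsto \int_{-N}^0 e^{s/\tau}|\psi(s)|_{C^m(\mathbb{T}^2)}^2 ds$ (each measurable by Fubini, using joint measurability of $(s,\psi) \mapsto |\psi(s)|_{C^m(\mathbb{T}^2)}$ from part (i)), together with the fact that $\Omega_0 \in \mathcal{F}$.

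The main obstacle I expect is the careful bookkeeping in part (ii): ensuring the exceptional set is genuinely $(\theta(t))_{t \in \mathbb{R}}$-invariant (not merely $\mathbb{P}$-almost invariant) while keeping full measure, and handling the dependence of the additive constant $C(\psi)$ on $\psi$ in a way that is compatible with the shift — a translate of $\psi$ satisfies the same sublinear bound but with a shifted constant, so one must phrase $\Omega_0$ as the set where the bound holds for all $\delta$ with \emph{some} finite constant, which is then clearly shift-stable. Everything else (measurability via continuity/countable suprema, the $L^2$-bound by reusing the Corollary \ref{Sec4-Cor1} estimate, integrability of the exponentially weighted integral) is routine.
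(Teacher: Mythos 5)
Your proof is correct, and part (i) follows essentially the paper's own route: continuity of the evaluation and norm maps in the compact--open topology gives measurability, and the square--integrable dominating random variable $K$ from Corollary \ref{Sec4-Cor1} gives the uniform bound (\ref{Sec6-eq2}). For part (ii), however, you take a genuinely different path. The paper invokes Proposition 4.1.3 of \cite{Arnold98} (the dichotomy of linear growth for stationary processes, a Birkhoff--type result) applied to $\psi \mapsto \vert \psi \vert_{C([0,1],C^m(\mathbb{T}^2))}$, which directly outputs a $(\theta(t))_{t\in\mathbb{R}}$--invariant full--measure set on which $\vert \theta(t)\psi \vert_{C([0,1],C^m(\mathbb{T}^2))}/\vert t \vert \to 0$; you instead run Chebyshev plus Borel--Cantelli on the blocks $[n,n+1]$, $n \in \mathbb{Z}$, using stationarity of $\mathbb{P}$ to get a uniform second--moment bound, and then manufacture the invariant set by hand as $\Omega_0 = \{\psi : \sup_{t \in \mathbb{Q}} (\vert \psi(t) \vert_{C^m(\mathbb{T}^2)} - \delta \vert t \vert) < \infty \text{ for all rational } \delta > 0\}$. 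Both arguments are sound. Your version is more elementary and self--contained (no appeal to the ergodic--theoretic dichotomy) but needs the second moment, which is available here; Arnold's dichotomy requires only first moments and is the standard device for producing tempered bounds in RDS theory. Your observation that a translate of $\psi$ obeys the same sublinear bound with a shifted additive constant shows the candidate set is \emph{exactly} shift--invariant, so the ``invariant kernel'' fallback you mention is not even needed, and restricting the supremum to rational $t$ gives measurability of $\Omega_0$. The final integrability and measurability of the exponentially weighted integral is handled the same way in both proofs.
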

\begin{proof}
(i): Since $\Omega$ is endowed with the compact open topology, the mappings $ \psi \mapsto \vert \psi(t) \vert_{C^m(\mathbb{T}^2)} $ and $ \psi \mapsto \vert \psi \vert_{C([T_1,T_2], C^m(\mathbb{T}^2))}$ for all $- \infty < T_1 < t < T_2 < \infty $ are $\mathcal{F}$--$\mathcal{B}(\mathbb{R})$ measurable. Further, by Corollary \ref{Sec4-Cor1} for any $- \infty < T_1 < T_2 < \infty$ there is a positive random variable $K(T_1,T_2)=K(m, \gamma, H, \nu, T_1, T_2) \in L^2(\Omega, \mathcal{F}^{\mathbb{P}}, \mathbb{P}) \subseteq L^1(\Omega, \mathcal{F}^{\mathbb{P}}, \mathbb{P})$, where $\mathcal{F}^{\mathbb{P}}$ denotes the completion of $\mathcal{F}$ w.r.t. $\mathbb{P}$ such that $\mathbb{P}$-a.s.
\begin{equation}
\vert \psi \vert_{C([T_1,T_2], C^m(\mathbb{T}^2))} \leq K(T_1,T_2, \psi), 
\label{Sec6-eq4}
\end{equation}
since $K$ may be measurable only with respect to the completed $\sigma$--algebra $\mathcal{F}^{\mathbb{P}}$. So for all $- \infty < T_1 < t < T_2 < \infty $ we have
\begin{equation}
\begin{split}
\mathbb{E} \big(\vert id_\Omega(t) \vert_{C^m(\mathbb{T}^2)} \big) \leq \mathbb{E} \big(\vert id_\Omega \vert_{C([T_1,T_2], C^m(\mathbb{T}^2))} \big) &= \mathbb{E}^\mathbb{P} \big( \vert id_\Omega \vert_{C([T_1,T_2], C^m(\mathbb{T}^2))} \big) \\ 				&\leq \mathbb{E}^{\mathbb{P}} \big( K(T_1,T_2) \big) < \infty,
\end{split}
\label{Sec6-eq5}
\end{equation}
where $\mathbb{E}^{\mathbb{P}}$ is related to the extension of $\mathbb{P}$ to $\mathcal{F}^{\mathbb{P}}$. \\
(ii): We have by (\ref{Sec6-eq4})
\begin{equation}
\sup_{r \in [0,1]} \vert \theta(r) \psi \vert_{C([T_1,T_2], C^m(\mathbb{T}^2))} = \vert \psi \vert_{C([T_1,T_2+1], C^m(\mathbb{T}^2))} \leq K(T_1,T_2+1, \psi)
\label{Sec6-eq6}
\end{equation}
and
\begin{equation}
\begin{split}
\mathbb{E} \big( \sup_{r \in [0,1]} \vert \theta(r) id_\Omega &\vert_{C([T_1,T_2], C^m(\mathbb{T}^2))} \big) = \mathbb{E} \big( \vert id_\Omega \vert_{C([T_1,T_2+1], C^m(\mathbb{T}^2))} \big) \\
			 &= \mathbb{E}^{\mathbb{P}} \big( \vert id_\Omega \vert_{C([T_1,T_2+1], C^m(\mathbb{T}^2))} \big) \leq \mathbb{E}^{\mathbb{P}} \big( K(T_1,T_2+1) \big) < \infty.
\end{split}
\label{Sec6-eq7}
\end{equation}
Taking (\ref{Sec6-eq6}) and (\ref{Sec6-eq7}) into account, Proposition 4.1.3 in \cite{Arnold98} (\emph{the dichotomy of linear growth for stationary processes}) with the measurable mapping $ \psi \mapsto \vert \psi \vert_{C([0,1], C^m(\mathbb{T}^2))}$ implies that 
$$ \limsup_{t \to \pm \infty} \frac{ \vert \theta(t) \psi \vert_{C([0,1], C^m(\mathbb{T}^2))}}{\vert t \vert}=0 $$
on a $( \theta(t))_{t \in \mathbb{R}}$-invariant set $\mathcal{F} \ni \Omega_0 \subset \Omega$ with $\mathbb{P}(\Omega_0)=1 $. Therefore, for any $\delta >0$, $\psi \in \Omega_0$ there is a constant $T(\delta, \psi)=T(\delta, m, \psi)>0$ such that 
$$ \vert \psi(t) \vert_{C^m (\mathbb{T}^2)} = \vert \theta(t) \psi(0) \vert_{C^m (\mathbb{T}^2)} \leq  \vert \theta(t) \psi \vert_{C([0,1], C^m(\mathbb{T}^2))} \leq \delta \vert t \vert $$
for $ \vert t \vert \geq T(\delta, \psi)$. Hence, by (\ref{Sec6-eq4}) for any $\delta >0$, $\psi \in \Omega_0$ we have
\begin{equation}
\vert \psi(t) \vert_{C^m(\mathbb{T}^2)} = \vert \theta(t) \psi(0) \vert_{C^m(\mathbb{T}^2)} \leq \delta \vert t \vert + K(-T(\delta, \psi),T(\delta, \psi), \psi)
\label{Sec6-eq8}
\end{equation}
for all $ t \in \mathbb{R}$. Therefore, for $\tau>0 $ the mapping
$$ \psi \mapsto \begin{cases}
															 \int_{- \infty}^0 e^{\frac{s}{\tau}} \vert \psi(s) \vert_{C^m(\mathbb{T}^2)}^2 ds  & \text{ for } \psi \in \Omega_0 \\
																								0 & \text{ for } \psi \notin \Omega_0
								\end{cases} $$
is well--defined and $\mathcal{F}$--$\mathcal{B}(\mathbb{R})$ measurable, since for $\tau>0$ and $n \in \mathbb{N}$ the mappings
$$ \psi \mapsto \begin{cases}
															 \int_{- n}^0 e^{\frac{s}{\tau}} \vert \psi(s) \vert_{C^m(\mathbb{T}^2)}^2 ds  & \text{ for } \psi \in \Omega_0 \\
																								0 & \text{ for } \psi \notin \Omega_0
								\end{cases} $$
are finite, $\mathcal{F}$--$\mathcal{B}(\mathbb{R})$ measurable and the $\psi$--wise limits for $n \to \infty$ are finite by (\ref{Sec6-eq8}).
\end{proof}

\begin{prop}
Suppose Assumption \ref{Main-Assumption} holds and that there is $m \in \mathbb{N}$, $m \geq 2$, and $\gamma \in (0,1)$ such that
\begin{equation}
\sum_{k \in K} \lambda_k \alpha_k^{2 \gamma - 2H} \vert k \vert^{2m} < \infty \quad \text{and} \quad \sum_{k \in K} \lambda_k \alpha_k^{- 2H} \vert k \vert^{2m+2 \gamma} < \infty. 
\label{Sec6-eq9}
\end{equation}
Then the function $\phi: \mathbb{R} \times \Omega \times \mathbb{T}^2 \times \mathbb{R}^2 \longrightarrow \mathbb{T}^2 \times \mathbb{R}^2$
\begin{equation*}
(t, \psi, (x,y))	\mapsto \phi(t, \psi, (x,y)):= \phi(t, \psi) \binom{x}{y} := \binom{x(t)}{\dot{x}(t)}, 
\end{equation*}
defines a $C^{m-1}$--RDS over the ergodic metric dynamical system $(\Omega, \mathcal{F}, \mathbb{P}, (\theta(t))_{t \in \mathbb{R}})$ introduced in Proposition \ref{Sec6-Prop1}, where $\binom{x(t)}{\dot{x}(t)}  \in \mathbb{T}^2 \times \mathbb{R}^2$ is the unique global $C^{m-1}$--solution for $\tau>0$, $\psi \in \Omega$ and $(x,y) \in \mathbb{T}^2 \times \mathbb{R}^2$ at time $t \in \mathbb{R}$ to 
\begin{equation}
\frac{d}{dt} \binom{x(t)}{\dot{x}(t)} = f_{\psi, \tau}(t, (x(t), \dot{x}(t))), \ \binom{x(0)}{\dot{x}(0)}= \binom{x}{y},
\label{Sec6-eq10}
\end{equation}
with $f_{\psi, \tau}: \mathbb{R} \times \mathbb{T}^2 \times \mathbb{R}^2 \rightarrow \mathbb{R}^4$ defined by
\begin{equation*}
(t, x, y ) \mapsto f_{\psi, \tau}(t, (x, y)) = \begin{pmatrix}
	y \\
	\frac{1}{\tau} (\nabla^{\bot} \psi(x,t) -y)
\end{pmatrix}. 
\end{equation*}
Here we change $\Omega$ to $\Omega:= \Omega_0$, i.e. to the $(\theta(t))_{t \in \mathbb{R}}$--invariant set $\Omega_0$ introduced in Lemma \ref{Sec6-Lem1}(ii).
\label{Sec6-Prop2}
\end{prop}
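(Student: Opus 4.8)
The plan is to verify in turn the four defining properties of a $C^{m-1}$--RDS over the metric dynamical system of Proposition \ref{Sec6-Prop1} (restricted to the $\theta$--invariant full--measure set $\Omega_0$ of Lemma \ref{Sec6-Lem1}(ii)): well--definedness together with global existence, the cocycle identity, $C^{m-1}$--smoothness and joint continuity in $(t,x)$, and finally joint measurability. First I would address well--definedness and global existence: fix $\psi\in\Omega=\Omega_0$; since $m\ge 2$ we have $\psi\in C(\mathbb{R},C^m(\mathbb{T}^2))\subset C(\mathbb{R},C^1(\mathbb{T}^2))$, so $f_{\psi,\tau}\in C(\mathbb{R},C^{m-1}(\mathbb{T}^2\times\mathbb{R}^2,\mathbb{R}^4))$ and in particular is continuous in $(t,(x,y))$ and locally Lipschitz in $(x,y)$ locally uniformly in $t$. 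By Picard--Lindel\"of (Appendix B in \cite{Arnold98}) equation $(\ref{Sec6-eq10})$ has a unique maximal local $C^{m-1}$--solution, and since $\psi\in\Omega_0$, Lemma \ref{Sec6-Lem1}(ii) (with $\delta=1$) gives $\vert\psi(t)\vert_{C^1(\mathbb{T}^2)}\le\vert t\vert+C(\psi)$, which — exactly as in the proof of Corollary \ref{Sec4-Cor2} — yields the linear growth estimate $\vert f_{\psi,\tau}(t,x,y)\vert\le\sqrt{3}\,\vert(x,y)\vert+\sqrt{2}\,(\vert t\vert+C(\psi))$, whence the solution is global. Thus $\phi$ is defined on all of $\mathbb{R}\times\Omega_0\times(\mathbb{T}^2\times\mathbb{R}^2)$ and $\phi(0,\psi)=\mathrm{id}$.

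Next, the cocycle property. The key observation is that the vector field is covariant under the shift: since $(\theta(s)\psi)(x,t)=\psi(x,t+s)$ and $\nabla^{\bot}$ acts only on the space variable, $f_{\theta(s)\psi,\tau}(t,\cdot)=f_{\psi,\tau}(t+s,\cdot)$. Hence, if $z(\cdot)$ is the global solution of $(\ref{Sec6-eq10})$ with initial value $(x,y)$ at time $0$, then $t\mapsto z(t+s)$ solves $(\ref{Sec6-eq10})$ for $f_{\theta(s)\psi,\tau}$ with initial value $z(s)=\phi(s,\psi)(x,y)$, and uniqueness of global solutions forces $\phi(t+s,\psi)(x,y)=\phi(t,\theta(s)\psi)\big(\phi(s,\psi)(x,y)\big)$ for all $s,t\in\mathbb{R}$; this is meaningful precisely because $\Omega_0$ is $\theta$--invariant. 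For the regularity in $x$ and the continuity, I would invoke classical results on the differentiable dependence of ODE solutions on initial data (again Appendix B in \cite{Arnold98}): because $f_{\psi,\tau}\in C(\mathbb{R},C^{m-1}(\mathbb{T}^2\times\mathbb{R}^2,\mathbb{R}^4))$, the map $x\mapsto\phi(t,\psi,x)$ is $(m-1)$--times continuously differentiable and $\phi(\cdot,\psi,\cdot)$ together with all its $x$--derivatives is continuous on $\mathbb{R}\times(\mathbb{T}^2\times\mathbb{R}^2)$.

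The step I expect to be the main obstacle is joint measurability of $\phi$. The strategy is: for fixed $(t,x)$ show $\psi\mapsto\phi(t,\psi,x)$ is $\mathcal{F}$--measurable, and then combine this with the continuity in $(t,x)$ above and the separability of $\mathbb{R}\times(\mathbb{T}^2\times\mathbb{R}^2)$ to conclude, via Lemma 2.1 in \cite{Crauel02}, that $\phi$ is $\mathcal{B}(\mathbb{R})\otimes\mathcal{F}\otimes\mathcal{B}(\mathbb{T}^2\times\mathbb{R}^2)$--$\mathcal{B}(\mathbb{T}^2\times\mathbb{R}^2)$ measurable. For measurability in $\psi$, note that $\psi\mapsto f_{\psi,\tau}(r,z)$ is continuous for fixed $(r,z)$ — it depends continuously on $\psi(r)\in C^m(\mathbb{T}^2)$ through the first spatial derivatives, and $\psi\mapsto\psi(r)$ is continuous for the compact--open topology on $\Omega$ — hence measurable; the Picard iterates
$$ \phi^{(0)}(t,\psi,x):=(x,y), \qquad \phi^{(n+1)}(t,\psi,x):=(x,y)+\int_0^t f_{\psi,\tau}\big(r,\phi^{(n)}(r,\psi,x)\big)\,dr $$
are then measurable in $\psi$ and, on time intervals and compacta where the growth bound of Lemma \ref{Sec6-Lem1}(ii) confines the solutions to a fixed ball (so that the Lipschitz constants are uniform), converge locally uniformly to the solution, making the limit measurable; one then patches over $\mathbb{R}$ using global existence. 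Alternatively, one can invoke continuous dependence of the ODE flow on the vector field — which here is continuous as a map $\Omega=C(\mathbb{R},C^m(\mathbb{T}^2))\to C(\mathbb{R},C^{m-1}(\mathbb{T}^2\times\mathbb{R}^2,\mathbb{R}^4))$ — to see that $\phi$ is in fact jointly continuous, hence measurable. In both approaches it is the restriction to $\Omega_0$ that supplies the uniform bounds needed to make the convergence/continuity arguments go through.
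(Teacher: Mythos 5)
Your proposal is correct and follows essentially the same route as the paper: global existence via the linear-growth estimate of Corollary \ref{Sec4-Cor2} (using Lemma \ref{Sec6-Lem1}(ii) on $\Omega_0$), the cocycle property from shift-covariance of $f_{\psi,\tau}$ and uniqueness of solutions, and joint measurability by combining continuity in $(t,(x,y))$ with measurability in $\psi$ through Crauel's lemma. The only difference is one of detail: where the paper declares the measurability of $\psi\mapsto\phi(t,\psi)(x,y)$ to be obvious from the measurability of the evaluation maps $\psi\mapsto\psi(x,t)$, you substantiate it via Picard iteration (or continuous dependence on the vector field), which is a welcome elaboration rather than a departure.
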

\begin{proof}
First notice that by (\ref{Sec6-eq9}) and Corollary \ref{Sec4-Cor2} for all $(x,y) \in \mathbb{T}^2 \times \mathbb{R}^2$, $\psi \in \Omega$ (i.e. $\psi \in \Omega_0 $) there is a unique global $C^{m-1}$--solution $\binom{x(t)}{\dot{x}(t)}$ to equation (\ref{Sec6-eq10}). \\
Since $t \mapsto \phi(t, \psi)\binom{x}{y}$ is continuous for every $(\psi, (x,y)) \in \Omega \times \mathbb{T}^2 \times \mathbb{R}^2 $ and $(x,y) \mapsto \phi(t, \psi)\binom{x}{y}$ is continuous for every $(t,\psi) \in  \mathbb{R} \times \Omega $, to prove the measurability of $\phi$ by Lemma 1.1 in \cite{Crauel02} we only need to prove the measurability of $\psi \mapsto \phi(t, \psi)\binom{x}{y}$ for every $(t, (x,y)) \in \mathbb{R} \times \mathbb{T}^2 \times \mathbb{R}^2$. But this measurability is obvious: Since $\Omega$ is equipped with the compact open topology, $\psi \mapsto \psi(x,t)$ is measurable for every $(x, t) \in \mathbb{T}^2 \times \mathbb{R}$ and therefore also $\psi \mapsto \phi(t, \psi)\binom{x}{y}$. \\
The cocycle property is just a consequence of the uniqueness of the solution to equation (\ref{Sec6-eq10}). 
\end{proof}
The next theorem ensures the existence of the random $\mathcal{D}$--attractor.
\begin{thm}
Suppose all assumptions of Proposition \ref{Sec6-Prop2} hold. Then the $C^{m-1}$--RDS $\phi$ defined in Proposition \ref{Sec6-Prop2} has a unique random $\mathcal{D}$--attractor $\mathcal{A}$, where the universe of closed random sets $\mathcal{D}$ is given by
\begin{equation}
\begin{split}
\mathcal{D}:= \big\{ D \ | \ D &\text{ is a closed random set of } \mathbb{T}^2 \times \mathbb{R}^2 \text{ with } r_D(\psi):= \sup_{(x,y) \in D(\psi)} \vert (x,y) \vert < \infty \\
			 &\text{ and } r_D(\theta(-t) \psi) e^{-ct} \to 0 \text{ for } t \to \infty \text{ and any }  c>0, \ \psi \in \Omega      \big\}.
\end{split}
\label{Sec6-eq11}
\end{equation}
Further, for any $\delta>0$
\begin{equation}
B^\delta(\psi):= \left\{ (x,y) \in \mathbb{T}^2 \times \mathbb{R}^2  \big| \ \vert y \vert^2 \leq \frac{(1+ \delta)}{\tau} \int_{- \infty}^0 e^{\frac{u}{\tau}} \vert \psi(u) \vert^2_{C^1(\mathbb{T}^2)} du \right\}, \ \psi \in \Omega,
\label{Sec6-eq12}
\end{equation}
is a $\mathcal{D}$--absorbing and $\phi$--forward invariant closed random set. In particular, for any $\delta >0$ we have
$$ \mathcal{A}(\psi)= \bigcap_{t \in \mathbb{N}} \phi(t, \theta(-t) \psi)B^\delta(\theta(-t)\psi), \ \psi \in \Omega. $$
\label{Sec6-Thm2}
\end{thm}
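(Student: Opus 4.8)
The plan is to check the hypotheses of Theorem \ref{Sec6-Thm1}: it suffices to show that $\mathcal{D}$ is a universe of closed random sets, that $B^\delta$ is a non-empty, compact-valued closed random set lying in $\mathcal{D}$, and that $B^\delta$ is $\phi$--forward invariant and $\mathcal{D}$--absorbing; the intersection formula for $\mathcal{A}$ is then immediate from Theorem \ref{Sec6-Thm1}, and it holds for every $\delta>0$ because the $\mathcal{D}$--attractor is unique. That $\mathcal{D}$ is a universe is clear: if $D'(\psi)\subseteq D(\psi)$ for all $\psi$ and $D\in\mathcal{D}$, then $r_{D'}\le r_D$, so both the finiteness of the radius and the temperedness $r_{D'}(\theta(-t)\psi)e^{-ct}\to 0$ are inherited by $D'$. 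For $B^\delta$, setting $R(\psi)^2:=\tfrac{1+\delta}{\tau}\int_{-\infty}^0 e^{u/\tau}|\psi(u)|_{C^1(\mathbb{T}^2)}^2\,du$ we have $B^\delta(\psi)=\mathbb{T}^2\times\overline{B_{\mathbb{R}^2}(0,R(\psi))}$; this is non-empty (it contains $(x,0)$ for every $x$), closed, and compact (a product of the compact torus with a closed ball), and it is a measurable random set because $\psi\mapsto R(\psi)^2$ is $\mathcal{F}$--$\mathcal{B}(\mathbb{R})$ measurable and finite on $\Omega=\Omega_0$ by Lemma \ref{Sec6-Lem1}(ii) (whose proof applies verbatim with $C^1$ in place of $C^m$, as $m\ge 2\ge 1$), and since $B^\delta(\psi)$ projects onto all of $\mathbb{T}^2$ one has $dist((x,y),B^\delta(\psi))=(|y|-R(\psi))^+$.

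The heart of the matter is an \emph{a priori} bound for the velocity component of (\ref{Sec6-eq10}). Writing $y(t)=\dot x(t)$, from $\tau\dot y=\nabla^{\bot}\psi(x(t),t)-y$ and Young's inequality one gets
\[
\tfrac{d}{dt}|y(t)|^2 \le -\tfrac{1}{\tau}|y(t)|^2+\tfrac{1}{\tau}|\nabla^{\bot}\psi(x(t),t)|^2 \le -\tfrac{1}{\tau}|y(t)|^2+\tfrac{1}{\tau}|\psi(t)|_{C^1(\mathbb{T}^2)}^2 ,
\]
using $|\partial_{x_1}\psi|^2+|\partial_{x_2}\psi|^2\le|\psi|_{C^1(\mathbb{T}^2)}^2$ exactly as in the proof of Corollary \ref{Sec4-Cor2}. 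Variation of constants then gives, for the cocycle $\phi(t,\psi)$ driven by $\psi$,
\[
|y(t)|^2 \le e^{-t/\tau}|y(0)|^2+\tfrac{1}{\tau}\int_0^t e^{-(t-u)/\tau}|\psi(u)|_{C^1(\mathbb{T}^2)}^2\,du .
\]
Replacing $\psi$ by $\theta(-t)\psi$, whose velocity field at time $u$ equals $\psi(\cdot,u-t)$, and substituting $s=u-t$ yields $|y(t)|^2\le e^{-t/\tau}|y(0)|^2+\tfrac{1}{\tau}\int_{-t}^0 e^{s/\tau}|\psi(s)|_{C^1(\mathbb{T}^2)}^2\,ds$. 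If $(x(0),y(0))\in D(\theta(-t)\psi)$ with $D\in\mathcal{D}$ then $|y(0)|\le r_D(\theta(-t)\psi)$, so the first term is $\le\big(r_D(\theta(-t)\psi)e^{-t/(2\tau)}\big)^2\to 0$ by temperedness, while the second term is $\le R(\psi)^2/(1+\delta)$ for every $t$; hence for $t$ past an absorption time $t_D(\psi)$ we get $|y(t)|^2\le R(\psi)^2$, i.e. $\phi(t,\theta(-t)\psi)D(\theta(-t)\psi)\subseteq B^\delta(\psi)$. (A measure-zero set of degenerate paths such as $\psi\equiv 0$, where $R(\psi)=0$, may first have to be removed from $\Omega_0$ by passing to a smaller $(\theta(t))_{t\in\mathbb{R}}$--invariant set of full measure on which $R(\cdot)>0$.)

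For forward invariance take $(x,y)\in B^\delta(\psi)$, so $|y|^2\le R(\psi)^2$, and run $\phi(t,\psi)$ for $t\ge 0$; the bound above gives $|y(t)|^2\le e^{-t/\tau}R(\psi)^2+\tfrac{1}{\tau}\int_0^t e^{-(t-u)/\tau}|\psi(u)|_{C^1(\mathbb{T}^2)}^2\,du$. On the other hand, substituting $w=t+u$ in the definition of $R(\theta(t)\psi)^2$ gives $R(\theta(t)\psi)^2=e^{-t/\tau}R(\psi)^2+\tfrac{1+\delta}{\tau}\int_0^t e^{-(t-w)/\tau}|\psi(w)|_{C^1(\mathbb{T}^2)}^2\,dw$, and since $\tfrac1\tau\le\tfrac{1+\delta}{\tau}$ we conclude $|y(t)|^2\le R(\theta(t)\psi)^2$, i.e. $\phi(t,\psi)B^\delta(\psi)\subseteq B^\delta(\theta(t)\psi)$. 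Finally $B^\delta\in\mathcal{D}$: clearly $r_{B^\delta}(\psi)\le\mathrm{diam}(\mathbb{T}^2)+R(\psi)<\infty$, and writing $R(\theta(-t)\psi)^2=\tfrac{1+\delta}{\tau}e^{t/\tau}\int_{-\infty}^{-t}e^{w/\tau}|\psi(w)|_{C^1(\mathbb{T}^2)}^2\,dw$ and inserting the sublinear growth bound $|\psi(w)|_{C^1(\mathbb{T}^2)}\le\delta'|w|+C(\psi)$ from Lemma \ref{Sec6-Lem1}(ii) (together with $\int_t^\infty e^{-v/\tau}v^2\,dv=\tau e^{-t/\tau}(t^2+2\tau t+2\tau^2)$) shows that $R(\theta(-t)\psi)^2$ grows at most polynomially in $t$, so $r_{B^\delta}(\theta(-t)\psi)e^{-ct}\to 0$ for every $c>0$. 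Since $\phi$ is a continuous RDS (indeed $C^{m-1}$ with $m\ge 2$) by Proposition \ref{Sec6-Prop2}, Theorem \ref{Sec6-Thm1} applies and yields the unique random $\mathcal{D}$--attractor with the asserted intersection formula.

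The main obstacle is making the constants in the energy estimate match the precise shape of $B^\delta$: the factor $(1+\delta)$ is exactly the slack needed both to swallow the non-vanishing exponential remainder $e^{-t/\tau}|y(0)|^2$ in the absorption step and to close the forward-invariance comparison, while the temperedness of $B^\delta$ itself relies entirely on the dichotomy-of-linear-growth estimate of Lemma \ref{Sec6-Lem1}(ii). The measurability of $B^\delta$ and the removal of the degenerate path $\psi\equiv 0$ are routine.
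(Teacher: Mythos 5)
Your proposal is correct and follows essentially the same route as the paper: verify the hypotheses of Theorem \ref{Sec6-Thm1} by deriving the energy estimate $\vert \dot{x}(t)\vert^2 \leq e^{-t/\tau}\vert\dot{x}(0)\vert^2 + \tfrac{1}{\tau}\int_{-t}^0 e^{u/\tau}\vert\psi(u+t)\vert^2_{C^1(\mathbb{T}^2)}\,du$ via Young's inequality and an integrating factor, use the $(1+\delta)$ slack for both forward invariance and absorption, and invoke Lemma \ref{Sec6-Lem1}(ii) for temperedness of $B^\delta$. If anything, you are slightly more careful than the paper on two points it glosses over: the explicit polynomial-growth computation showing $r_{B^\delta}(\theta(-t)\psi)e^{-ct}\to 0$ (the paper's displayed quantity omits the time shift), and the degenerate case $R(\psi)=0$ where strict set inclusion in the absorption step could fail and a null invariant set must be discarded.
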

\begin{proof}
In view of Theorem \ref{Sec6-Thm1} we only have to show that for any $\delta > 0$ the random set $B(\psi):= B^\delta(\psi)$ defined in (\ref{Sec6-eq12}) is a $\mathcal{D}$--absorbing and $\phi$--forward invariant closed random set. \\
Notice here that $ \psi \mapsto \int_{- \infty}^0 e^{\frac{u}{\tau}} \vert \psi(u) \vert^2_{C^1(\mathbb{T}^2)} du$ is measurable and finite for every $\psi \in \Omega$. This follows by Lemma \ref{Sec6-Lem1}(ii) and recall here again the change of $\Omega$ in Proposition \ref{Sec6-Prop2}. Further, the random set valued map $B$ takes values in closed bounded subsets of $\mathbb{T}^2 \times \mathbb{R}^2$ and is measurable since the random radius $\psi \mapsto r_C(\psi) = \sup_{(x,y) \in C(\psi)} \vert (x,y) \vert$ is measurable, where $C$ is a bounded closed random set. Therefore, $B$ is a closed random set. \\
In the following, for any set $C \subseteq \mathbb{T}^2 \times \mathbb{R}^2 $ we set
$$ \pi_{\mathbb{T}^2}(C):= \{ \pi_{\mathbb{T}^2}(x,y) | (x,y) \in C \}, \quad \pi_{\mathbb{R}^2}(C):= \{ \pi_{\mathbb{R}^2}(x,y) | (x,y) \in C \}, $$
where
$$ \pi_{\mathbb{T}^2}: \mathbb{T}^2 \times \mathbb{R}^2 \rightarrow \mathbb{T}^2, \ (x,y) \mapsto x, \quad \text{and} \quad \pi_{\mathbb{R}^2}: \mathbb{T}^2 \times \mathbb{R}^2 \rightarrow \mathbb{R}^2, \ (x,y) \mapsto y.$$
Next we prove that $\psi \mapsto B(\psi)$ is $\phi$--forward invariant, i.e. $ \phi(t, \psi) B(\psi) \subseteq B(\theta(t) \psi) $ for all $t \geq 0$, $\psi \in \Omega$. So to prove the $\phi$--forward invariance of $B$ we only have to show that
$$ \sup_{(x,y) \in B(\psi)} \big\vert \pi_{\mathbb{R}^2}(\phi(t, \psi) \binom{x}{y})  \big\vert^2 = \sup_{(x,y) \in B(\psi)} \{ \vert \dot{x}(t)  \vert^2 \ | \ \dot{x}(0)=y \} \leq  \sup_{(x,y) \in B(\theta(t) \psi)} \vert y \vert^2 $$
for any $t \geq 0$, $\psi \in \Omega$, since $\pi_{\mathbb{T}^2}(B(\theta(t) \psi)) = \mathbb{T}^2$ for any $t \geq 0$ and $\psi \in \Omega$. \\
To estimate $\vert \dot{x}(t) \vert^2$ for $t>0$, we take the inner product of the equation
$$ \tau \ddot{x}(s) = \nabla^{\bot} \psi ( x(s),s ) - \dot{x}(s) $$
with $\dot{x}(s)$ and obtain
\begin{equation*}
\begin{split}
\tau \frac{1}{2} \frac{d}{ds} \vert \dot{x}(s) \vert^2 = \langle \dot{x}(s), \nabla^{\bot} \psi ( x(s),s ) \rangle - \vert \dot{x}(s) \vert^2 &\leq \frac{1}{2} \vert \nabla^{\bot} \psi ( x(s),s ) \vert^2 + \frac{1}{2} \vert \dot{x}(s) \vert^2 - \vert \dot{x}(s) \vert^2 \\
				&= \frac{1}{2} \vert \nabla^{\bot} \psi ( x(s),s ) \vert^2 - \frac{1}{2} \vert \dot{x}(s) \vert^2,
\end{split}
\end{equation*}
where we used $\langle z_1, z_2 \rangle \leq \frac{1}{2} \vert z_1 \vert^2 + \frac{1}{2} \vert z_2 \vert^2$ for $z_1,z_2 \in \mathbb{R}^2$. \\
Multiplying by $\frac{2}{\tau} e^{\frac{s}{ \tau}}$ on each side gives
\begin{equation}
\frac{d}{ds} \big( e^{\frac{s}{\tau}} \vert \dot{x}(s) \vert^2  \big) \leq \frac{1}{\tau} e^{\frac{s}{\tau}} \vert \nabla^{\bot} \psi ( x(s),s ) \vert^2 .
\label{Sec6-eq13}
\end{equation}
By integrating the inequality (\ref{Sec6-eq13}) from $0$ to $t$ we get
$$ e^{\frac{t}{\tau}} \vert \dot{x}(t) \vert^2 - \vert \dot{x}(0) \vert^2 \leq \frac{1}{\tau} \int_0^t e^{\frac{s}{\tau}} \vert \nabla^{\bot} \psi ( x(s),s ) \vert^2 ds $$
so that
\begin{equation*}
\begin{split}
\vert \dot{x}(t) \vert^2 &\leq e^{-\frac{t}{\tau}} \vert \dot{x}(0) \vert^2 + \frac{1}{\tau} \int_0^t e^{-\frac{(t-s)}{\tau}} \vert \nabla^{\bot} \psi ( x(s),s ) \vert^2 ds \\
			&\leq e^{-\frac{t}{\tau}} \vert \dot{x}(0) \vert^2 + \frac{1}{\tau} \int_0^t e^{-\frac{(t-s)}{\tau}} \vert \psi(s) \vert^2_{C^1(\mathbb{T}^2)} ds \\
			&\stackrel{u = s-t}{=} e^{-\frac{t}{\tau}} \vert \dot{x}(0) \vert^2 + \frac{1}{\tau} \int_{-t}^0 e^{\frac{u}{\tau}}  \vert \psi(u+t) \vert^2_{C^1(\mathbb{T}^2)} du.
\end{split}
\end{equation*}
Therefore
\begin{equation}
\big\vert \pi_{\mathbb{R}^2}(\phi(t, \psi) \binom{x(0)}{\dot{x}(0)})  \big\vert^2 = \vert \dot{x}(t) \vert^2 \leq e^{-\frac{t}{\tau}} \vert \dot{x}(0) \vert^2 + \frac{1}{\tau} \int_{-t}^0 e^{\frac{u}{\tau}}  \vert \psi(u+t) \vert^2_{C^1(\mathbb{T}^2)} du.
\label{Sec6-eq14}
\end{equation}
Now by (\ref{Sec6-eq14}) and the definition of $B(\psi)$ we obtain for $t \geq 0$
\begin{equation*}
\begin{split}
\sup_{(x,y) \in B(\psi)} \big\vert &\pi_{\mathbb{R}^2}(\phi(t, \psi) \binom{x}{y})  \big\vert^2 \leq e^{-\frac{t}{\tau}}  \sup_{(x,y) \in B(\psi)} \vert y \vert^2 + \frac{1}{\tau} \int_{-t}^0 e^{\frac{u}{\tau}}  \vert \psi(u+t) \vert^2_{C^1(\mathbb{T}^2)} du \\
			 &\leq \frac{(1+ \delta)}{\tau} e^{-\frac{t}{\tau}} \int_{- \infty}^0 e^{\frac{u}{\tau}} \vert \psi(u) \vert^2_{C^1(\mathbb{T}^2)} du + \frac{(1+ \delta)}{\tau} \int_{-t}^0 e^{\frac{u}{\tau}}  \vert \psi(u+t) \vert^2_{C^1(\mathbb{T}^2)} du \\
			&= \frac{(1+ \delta)}{\tau} \int_{- \infty}^{-t} e^{\frac{u}{\tau}} \vert \psi(u+t) \vert^2_{C^1(\mathbb{T}^2)} du + \frac{(1+ \delta)}{\tau} \int_{-t}^0 e^{\frac{u}{\tau}}  \vert \psi(u+t) \vert^2_{C^1(\mathbb{T}^2)} du \\
			&= \frac{(1+ \delta)}{\tau} \int_{- \infty}^{0} e^{\frac{u}{\tau}} \vert \psi(u+t) \vert^2_{C^1(\mathbb{T}^2)} du \\
			&=  \sup_{(x,y) \in B(\theta(t) \psi)} \vert y \vert^2.
\end{split}
\end{equation*}
So $\psi \mapsto B(\psi)$ is $\phi$--forward invariant. \\
Finally, we prove that $\psi \mapsto B(\psi)$ is $\mathcal{D}$--absorbing. For any $D \in \mathcal{D}$ where $\mathcal{D}$ is defined in (\ref{Sec6-eq11}) we have 
$$ \pi_{\mathbb{T}^2}(\phi(t,\theta(-t) \psi)D((\theta(-t))\psi)) \subseteq \pi_{\mathbb{T}^2}(B(\psi))= \mathbb{T}^2 $$
and by (\ref{Sec6-eq14})
\begin{equation}
\begin{split}
\sup_{(x,y) \in D(\theta(-t)\psi)} \big\vert \pi_{\mathbb{R}^2} &(\phi(t,\theta(-t) \psi) \binom{x}{y})  \big\vert^2 \\
		&\leq e^{-\frac{t}{\tau}} \sup_{(x,y) \in D(\theta(-t)\psi)} \vert y \vert^2 + \frac{1}{\tau} \int_{-t}^0 e^{\frac{u}{\tau}}  \vert \psi(u+t-t) \vert^2_{C^1(\mathbb{T}^2)} du \\
		 &= e^{-\frac{t}{\tau}} \sup_{(x,y) \in D(\theta(-t)\psi)} \vert y \vert^2 + \frac{1}{\tau} \int_{-t}^0 e^{\frac{u}{\tau}}  \vert \psi(u) \vert^2_{C^1(\mathbb{T}^2)} du
\end{split}
\label{Sec6-eq15}
\end{equation}
for any $t \geq 0$ and $\psi \in \Omega$. By the definition of the set $\mathcal{D}$ the first term on the right hand side of (\ref{Sec6-eq15}) converges for $t \to \infty$ to zero. The second term tends for $t \to \infty$ to $\frac{1}{\tau} \int_{- \infty}^{0} e^{\frac{u}{\tau}} \vert \psi(u) \vert^2_{C^1(\mathbb{T}^2)} du$. Further, Lemma \ref{Sec6-Lem1}(ii) implies that 
$$ e^{-tc} \frac{(1+ \delta)}{\tau} \int_{- \infty}^{0} e^{\frac{u}{\tau}} \vert \psi(u) \vert^2_{C^1(\mathbb{T}^2)} du \rightarrow 0 \quad \text{as } t \to \infty$$
for any $c, \delta >0$ and $\psi \in \Omega$. This ensures $B \in \mathcal{D}$, i.e. $B$ is also $\mathcal{D}$--absorbing. The assertion follows now by Theorem \ref{Sec6-Thm1}.
\end{proof}
\begin{rem}
The universe of closed random sets $\mathcal{D}$ defined in Theorem \ref{Sec6-Thm2} contains only sets which do not grow with exponential speed (or grow \emph{sub--exponentially fast}). Notice also that $ \lim_{t \to \infty} e^{-ct} r_D(\theta(-t) \psi)=0 $ for any $c>0$, $\psi \in \Omega$ where $D$ is a closed random set is equivalent to $ \lim_{t \to \infty} \log( \max \{1, r_D(\theta(-t) \psi) \})/t=0 $ for any $\psi \in \Omega$. Such subsets are also called \emph{tempered}.
\end{rem}
The next step would be to derive (upper) bounds of the \emph{Hausdorff dimension} of the random $\mathcal{D}$--attractor which is due to ergodicity of the underlying metric dynamical system $ \left(\Omega,\mathcal{F}, \mathbb{P}, \theta(t))_{t \in \mathbb{R}} \right)$ $\mathbb{P}$--a.s. constant. This will be studied also in the fractional noise case in a forthcoming paper.


\section{Simulation}

In this section we visualize the long--time behaviour of particle motions. We simulate the motion of $10^4$ particles uniformly distributed on the unit square at time zero with zero velocities for $5000$ time units according to the system ($M$) with $H=1/3$, $\nu = 10^{-2}$ and the Kolmogorov spectrum, but with different values of $\tau$. As already described in Section 5, we set the spectrum $(\lambda_k)_{k \in K}$ of $Q$ for $\vert k \vert > 2 \pi R$ with some fixed $R \in \mathbb{N}$ to zero, actually $R=2$ in our simulation. Then 
\begin{equation*}
v(x,t) = \sum_{k \in K, \ \vert k \vert \leq 2 \pi R} i \binom{k_2}{-k_1} \hat{\psi}_k(t) e^{i<k,x>}, \quad x \in \mathbb{T}^2, t \in \mathbb{R},
\end{equation*}
where $\hat{\psi}_k (t) = \sqrt{\lambda_k} \nu^H \int_{-\infty}^t e^{-(t-u) \nu \alpha_k} d\beta_k^H(u)$. To simulate $\hat{\psi}_k$, we use the \emph{Cholesky method} (see e.g. \cite{Asmussen07} pp. 311), since we know by Proposition \ref{Section3-Prop1}(i) and Remark \ref{Sec4-Rem1} the covariance structure of $\hat{\psi}_k$. Given a realisation of $\hat{\psi}_k$, $k \in K, \ \vert k \vert \leq 2 \pi R$, the velocity field $v$ can be computed efficiently using the \emph{fast Fourier transform algorithm}, see Chapter 4 in \cite{Sigurgeirsson01}. Finally, we integrate the trajectory of the inertial particle using the classic fourth--order Runge--Kutta scheme with time step double of that used for the velocity field. \\
Figure \ref{Figure1} shows the final position of the particles in the phase space associated to four different Stokes' number in form of $\tau$. The clustering is distinctive for $\tau = 1$ and $\tau = 10^{-1}$, but there is almost no clustering for very low and large values of $\tau$, i.e. for $\tau = 10^{-4}$ and $\tau = 10^2$ in the experiment. Repeating the simulation under same conditions but with different Hurst parameter produces similar results. Therefore, we conclude that also the generalized model with fractional noise captures the clustering phenomenon of preferential concentration. We leave the numerical study of the dependence of the clustering on the parameters $H$ and $\nu$ for a future publication.
\begin{figure}
\begin{center}
\includegraphics{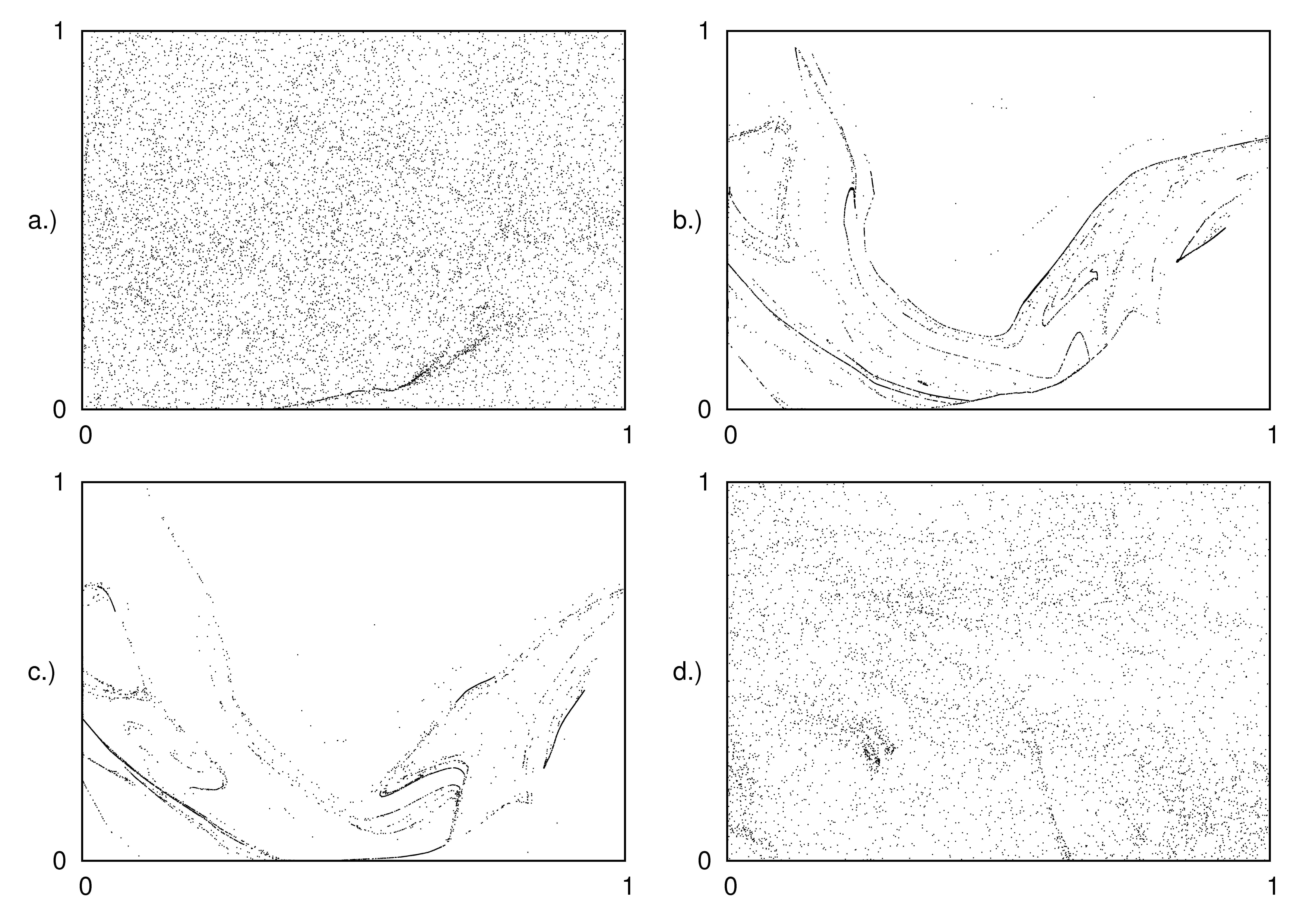}
\caption{Snapshots of the final positions of $10^4$ particles in the phase space associated to different Stokes' numbers $\tau$. a.) $\tau = 10^{-4}$, b.) $\tau = 10^{-1}$, c.) $\tau =  1$, d.) $\tau= 10^2$.}
\label{Figure1}
\end{center}
\end{figure}


\section*{Acknowledgements}
The author is very grateful to Wilhelm Stannat, Andrew Stuart, Dirk Bl\"omker and Andrew Duncan for helpful advice and suggestions. 


\newpage

\end{document}